\documentclass[11pt]{amsart}
\usepackage{bm}
\usepackage{graphicx}
\usepackage{mathpazo}
\usepackage{latexsym}   
\usepackage{amssymb}    
\usepackage{amsmath}    
\usepackage{amsthm}     
\usepackage{hyperref}
\usepackage[margin=1in]{geometry}
\usepackage[cmtip,all,matrix,arrow,tips,curve]{xy}
\usepackage[lite]{amsrefs}
\usepackage{color}

\def\res{\bm{{\rm R}}}

\newcommand{\Q}{{\mathbb Q}}

\newcommand{\Z}{{\mathbb Z}}

\newcommand{\F}{{\mathbb F}}

\newcommand{\cO}{\mathcal{O}}

\DeclareMathOperator{\GL}{GL}
\newcommand{\fg}{{\mathfrak g}}
\newcommand{\ft}{{\mathfrak t}}

\newcommand{\A}{{\mathbb A}}
\newcommand{\Gm}{{\mathbb{G}}_m}
\newcommand{\bG}{\mathbf {G}}

\newcommand{\can}{\mathrm{can}}
\newcommand{\spl}{\mathrm{spl}}
\newcommand{\cT}{{\mathcal T}}
\newcommand{\rep}{\xi}


\newcommand\cX{{\mathcal X}}

\DeclareMathOperator{\Gal}{Gal}

\def\aff{{\mathbb A}}
\def\ff{{\mathbb F}}
\def\rat{{\mathbb Q}}
\def\gp{{\mathbb G}}
\def\bs{\backslash}
\def\one{\bm{1}}
\def\inv{^{-1}}
\def\dual{^\vee}
\def\twiddle{\sim}
\def\tensor{\otimes}
\def\iso{\cong}
\def\cross{\times}
\def\inject{\hookrightarrow}
\newcommand{\oneover}[1]{\frac{1}{#1}}
\newcommand{\half}[1]{\frac{#1}{2}}
\newcommand{\abs}[1]{{\left|#1\right|}}
\newcommand{\st}[1]{\left\{#1\right\}}
\def\integ{{\mathbb Z}}
\def\ra{\rightarrow}
\def\units{^\times}
\DeclareMathOperator{\frob}{Fr}
\DeclareMathOperator{\vol}{vol}
\DeclareMathOperator{\mat}{Mat}
\DeclareMathOperator{\gl}{GL}
\DeclareMathOperator{\SL}{SL}

\DeclareMathOperator{\tr}{tr}
\DeclareMathOperator{\gal}{Gal}
\DeclareMathOperator{\End}{End}
\def\wnum{\widetilde{\#}}




\setlength{\parindent}{0in}
\setlength{\parskip}{\medskipamount}

\def\der{\textrm{der}}
\def\serre{\textrm{SO}}	
\def\geom{\textrm{geom}}
\def\can{\textrm{can}}
\def\cris{\textrm{cris}}
\def\m{\textrm{M}}
\DeclareMathOperator{\G}{G}

\newtheorem{theorem}{Theorem}[section]
\newtheorem{lemma}[theorem]{Lemma}
\newtheorem{proposition}[theorem]{Proposition}
\newtheorem{corollary}[theorem]{Corollary}

\newtheorem{defn}[theorem]{Definition}

\newtheorem*{thm*}{Theorem}

\numberwithin{equation}{section}

\newcommand{\fc}{{\mathfrak c}}
\newcommand{\rk}{\operatorname{rank}}
\newcommand{\Aut}{\operatorname{Aut}}
\newcommand{\fp}{\mathfrak p}
\newcommand{\M}{\operatorname{M}}

\begin{document}

\title{Elliptic curves, random matrices and orbital integrals}

\author{Jeffrey D. Achter}
\address{Colorado State University, Fort Collins, CO}
\email{achter@math.colostate.edu}

\author{Julia Gordon}
\address{University of British Columbia, Vancouver, BC}
\email{gor@math.ubc.ca}

\thanks{JDA's research was partially supported by  grants from the
Simons Foundation (204164) and the NSA (H98230-14-1-0161 and H98230-15-1-0247). JG's research was supported by NSERC}

\begin{abstract}
An isogeny class of elliptic curves over a finite field is determined by a quadratic Weil polynomial. Gekeler has given a  product formula, in terms of congruence considerations involving that polynomial, for the size of such an isogeny class.  In this paper, we give a new, transparent proof of this formula; it turns out that this product actually computes an adelic orbital integral which visibly counts the desired cardinality. This answers a question posed by N. Katz in \cite{katz_lt}*{Remark 8.7}. 
\end{abstract}

\maketitle

\section{Introduction}\label{secintro}

The isogeny class of an elliptic curve over a finite field $\ff_p$ of
$p$ elements  is
determined by its trace of Frobenius; calculating the size of
such an isogeny class is a classical problem.  Fix a number $a$ with
$\abs a \le 2 \sqrt p$, and let $I(a,p)$ be the
set of all elliptic curves over $\ff_p$ with trace of Frobenius $a$; further
suppose that $p\nmid a$, so that the isogeny class is ordinary.

Motivated by equidistribution considerations, Gekeler derives the following description  of the size of 
$I(a,\ff_p)$ (\cite{gekeler03}; see also \cite{katz_lt}).    For each rational
prime $\ell\not = p$, let
\begin{equation}
\label{eqgekelerterms}
\nu_\ell(a,p) = \lim_{n\ra\infty}\frac{\#\st{\gamma \in
  \gl_2(\integ/\ell^n): \tr(\gamma) \equiv a \bmod \ell^n,
  \det(\gamma)\equiv p \bmod \ell^n}}{\#\SL_2(\integ/\ell^n)/\ell^n}.
\end{equation}

For $\ell=p$, let 
\begin{equation}\label{eqgekeler-p}
\nu_p(a,p) = \lim_{n\ra\infty}\frac{\#\st{\gamma \in
  \M_2(\integ/p^n): \tr(\gamma) \equiv a \bmod p^n,
  \det(\gamma)\equiv p \bmod p^n}}{\#\SL_2(\integ/p^n)/p^n}.
\end{equation}

On average, the number of elements of $\gl_2(\integ/\ell^n)$ with a
given characteristic polynomial is 
$\#\gl_2(\integ/\ell^n)/(\#(\integ/\ell^n)\units \cdot \ell^n)$.
Thus, $\nu_\ell(a,p)$ measures the departure of the frequency of the
event $f_\gamma(T) = T^2-aT+p$ from the average.

It turns out that \cite[Thm.\ 5.5]{gekeler03}
\begin{equation}
\label{eqgekeler}
\wnum I(a,p) = \half 1 \sqrt p \nu_\infty(a,p) \prod_\ell \nu_\ell(a,p),
\end{equation}
where 
\[
\nu_\infty(a,p) = \frac 2\pi\sqrt{1-\frac{a^2}{4p}},
\]
$\wnum I(a,p)$ is a count weighted by automorphisms \eqref{eqdefwnum}, and we note that the term $H^*(a,p)$ of \cite{gekeler03} actually computes $2\wnum I(a,p)$ (see \cite[(2.10) and (2.13)]{gekeler03} and 
\cite[Theorem 8.5, p. 451]{katz_lt}).
 This equation is almost
miraculous.  An equidistribution assumption about Frobenius
elements, which is so strong that it can't possibly be true, leads one
to the correct conclusion.

In contrast to the heuristic, the proof of \eqref{eqgekeler} is
somewhat pedestrian.  
Let $\Delta_{a,p} = a^2-4p$, let $K_{a,p} =
\rat(\sqrt{\Delta_{a,p}})$, and let $\chi_{a,p}$ be the associated
quadratic character.  Classically, the size of the isogeny class
$I(a,\ff_p)$ is given by the Kronecker class number
$H(\Delta_{a,p})$. Direct calculation \cite{gekeler03} shows
that, at least for unramified primes $\ell$, 
\[
\nu_\ell(a,p) = \oneover{1-\frac{\chi_{a,p}(\ell)}{\ell}}
\]
 is the term at $\ell$ in the Euler product expansion of
 $L(1,\chi_{a,p})$.  More generally, a term-by-term comparison shows that the right-hand side of \eqref{eqgekeler} computes $H(\Delta_{a,p})$.

Even though \eqref{eqgekeler} is striking and unconditional, one might still want a pure
thought derivation of it.  (We are not alone in this desire; Katz calls
attention to this question in \cite{katz_lt}.)
Our goal in the present paper is to provide a conceptual explanation
of \eqref{eqgekeler}, and to extend it to the case of ordinary elliptic curves over an arbitrary finite field $\ff_q$.

In a companion work, we will use a similar method to give an analogous product formula for the size of an isogeny class of simple ordinary principally polarized abelian varieties over a finite field.

Our method relies on the description, due to
Langlands (for modular curves) and Kottwitz (in general), of the
points on a Shimura variety over a finite field.  A consequence of
their study is that  one can calculate the cardinality of an ordinary isogeny class of elliptic curves over $\ff_q$ using orbital integrals on the finite adelic points of $\gl_2$ (Proposition \ref{proplk}).
Our main observation is that
one can, without explicit calculation,  relate each local factor
$\nu_\ell(a,q)$ to an orbital integral 
\begin{equation}\label{eqlocal}
\int_{G_{\gamma_\ell}(\rat_\ell)\backslash\gl_2(\rat_\ell)} \one_{\gl_{2}(\integ_\ell)}(x\inv
\gamma_\ell x)\, dx, 
\end{equation}
where $\gamma_\ell$ is an element of $\gl_2(\Q_\ell)$ of trace $a$ and determinant $q$, $G_{\gamma_\ell}$ is its centralizer in $\gl_{2}(\rat_\ell)$, and $\one_{\gl_{2}(\integ_\ell)}$ is the characteristic function of the maximal compact subgroup $\gl_{2}(\integ_\ell)$.
Here the choice of the invariant measure $dx$ on the orbit is crucial.  On one hand, the measure that is naturally related to Gekeler's numbers is the so-called \emph{geometric measure} 
(cf. \cite{langlands-frenkel-ngo}), which we review in \S \ref{subsub:steinberg}. On the other hand, this measure is inconvenient for computing the global volume term that appears in the formula of Langlands and Kottwitz. 
The main technical difficulty is the comparison, which should be well-known but is hard to find in the literature, between the geometric measure and the so-called \emph{canonical measure}.

We start (\S \ref{secdef}) by establishing notation and reviewing the Langlands-Kottwitz formula.  We define the relevant, natural measures in \S \ref{seclocalcalc}, and study the comparison factor between them in \S \ref{sec:comparison}. 
Finally, in \S \ref{sec:global}, we complete the global calculation.   The appendix (\S \ref{appali}), by S. Ali Altu\u{g}, analyzes the comparison of measures from a slightly different perspective.

As we were finishing this paper, the authors of \cite{davidetal15} shared their preprint with us; we invite the reader to consult that work for a different approach to Gekeler's random matrix model.

\subsection*{Notation}
Throughout, $\ff_q$ is a finite field of characteristic $p$ and
cardinality $q=p^e$.   Let $\rat_q$ be the unique unramified extension
of $\rat_p$ of degree $e$, and let
$\integ_q \subset \rat_q$ be its ring of integers.  We use $\sigma$ to
denote both the canonical generator of $\gal(\ff_q/\ff_p)$ and its
lift to $\gal(\rat_q/\rat_p)$.  

Typically, $G$ will denote the algebraic group $\gl_2$.  While many of our results admit immediate generalization to other reductive groups, as a rule we resist this temptation unless the statement and its proof require no additional notation. 

Shortly, we will fix a regular semisimple element $\gamma_0 \in G(\rat) = \gl_2(\rat)$; its centralizer will variously be denoted $G_{\gamma_0}$ and $T$.

Conjugacy in an (abstract) group is denoted by $\twiddle$.

\subsection*{Acknowledgment} We have benefited from discussions with
Bill Casselman, Clifton Cunningham, David Roe, William Sawin, and
Sug-Woo Shin. We are particularly grateful to Luis Garcia for sharing
his insights. It is a great pleasure to thank these people.

\section{Preliminaries}\label{secdef}

Here we collect notation concerning isogeny classes (\ref{sub:defisog}) as well as basic information on Gekeler's ratios (\ref{sub:gekdef}) and the Langlands-Kottwitz formula (\ref{sub:lk}).

\subsection{Isogeny classes of elliptic curves}\label{sub:defisog}

If $E/\ff_q$ is an elliptic curve, then its characteristic polynomial of Frobenius has the form $f_{E/\ff_q}(T) = T^2-a_{E/\ff_q}T+q$, where $\abs {a_{E/\ff_q}} \le 2 \sqrt q$.  Moreover, $E_1$ and $E_2$ are $\ff_q$-isogenous if and only if $a_{E_1/\ff_q} = a_{E_2/\ff_q}$.  In particular, for a given integer $a$ with $\abs a \le 2 \sqrt q$, the set 
\[
I(a,q) = \st{ E/\ff_q: a_{E/\ff_q} = a}
\]is a single isogeny class of elliptic curves over $\ff_q$.   Its weighted cardinality is
\begin{equation}
\label{eqdefwnum}
\wnum I(a,q) := \sum_{E \in I(a,q)} \oneover{\#\Aut(E)}.
\end{equation}
A member of this isogeny class is ordinary if and only if $p\nmid a$; henceforth, we assume this is the case.

Fix an element $\gamma_0\in G(\rat)$ 
with characteristic polynomial $$f_0(T) = f_{a,q}(T) := T^2-aT+q.$$  Newton
polygon considerations show that exactly one root of $f_{a,q}(T)$ is a
$p$-adic unit, and in particular $f_{a,q}(T)$ has distinct roots.
Therefore, $\gamma_0$ is regular semisimple.  Moreover, any other
element of $G(\rat)$ with the same characteristic polynomial is
conjugate to $\gamma_0$.

Let $K = K_{a,q} = \rat[T]/f(T)$; it is a quadratic imaginary field.
If $E \in I(a,q)$, then its endomorphism algebra is $\End(E)\tensor
\rat \iso K$.  The centralizer $G_{\gamma_0}$ of $\gamma_0$ in $G$ is
the restriction of scalars torus $G_{\gamma_0} \iso \res_{K/\rat}
\gp_m$.

If $\alpha$ is an invariant of an isogeny class, we will variously
denote it as $\alpha(a,q)$, $\alpha(f_0)$, or $\alpha(\gamma_0)$,
depending on the desired emphasis.

\subsection{The Steinberg quotient}\label{sub:steinberg}
We review the general  definition of the Steinberg quotient.
Let $G$ be a split, reductive group of rank $r$, with simply connected derived group $G^\der$ and Lie algebra $\fg$; further assumpe that $G/G^\der \iso \gp_m$. (In the case of interest for this paper, $G = \gl_2$,  $r=2$, and $G^\der = \SL_2$.)

Let $T$ be a split maximal torus in $G$, $T^\der = T \cap G^\der$ (note that $T^\der$ is {\em not} the derived group of $T$), and let $W$ be the Weyl group of $G$ relative to $T$.  Let $A^\der=T^\der/W$ be the Steinberg quotient for the semisimple group $G^\der$. It is isomorphic to the affine space of dimension $r-1$. 

Let $A=A^\der\times \gp_m$ be the analogue of the Steinberg quotient for the reductive group $G$, cf  
\cite{langlands-frenkel-ngo}*{}.  
We think of $A$ as the space of ``characteristic polynomials''.  
There is a canonical map
\begin{equation}
\label{diagsteinberg}
\xymatrix{G \ar[r]^{\fc} & A}
\end{equation}

Since $G/G^\der \iso \gp_m$, we have
\[
A \iso \aff^{r-1}\cross \gp_m \subset \aff^r.
\]

\subsection{Gekeler numbers}\label{sub:gekdef} 

We resume our discussion of elliptic curves, and let $G = \gl_2$.  As
in \S \ref{sub:defisog}, fix data $(a,q)$ defining an ordinary isogeny
class over $\ff_q$.  Recall that, to each finite prime $\ell$, Gekeler
has assigned a local probability $\nu_\ell(a,q)$
\eqref{eqgekelerterms}-\eqref{eqgekeler-p}.  We give a geometric 
interpretation of this ratio, as follows.

Since $G$ is a group scheme over $\Z$, for any finite prime $\ell$, we
have a well-defined group $G(\Z_\ell)$, which is a (hyper-special)
maximal compact subgroup of $G(\Q_\ell)$, as well as the ``truncated''
groups $G(\Z_\ell/ \ell^n)$ for every integer $n\ge 0$.

Recall that, given the fixed data $(a,q)$, we have chosen an
element $\gamma_0 \in G(\rat)$.  Since the conjugacy class of a semisimple element of a general linear group is determined by its characteristic polynomial, $\gamma_0$ is well-defined up to conjugacy.

Let $\ell$ be any finite prime (we allow the possibility $\ell=p$);
using the inclusion $\rat \inject \rat_\ell$ we identify $\gamma_0$
with an element of $G(\rat_\ell)$. 
In fact, if $\ell \not = p$, then
$\gamma_0$ is a regular semisimple element of $G(\integ_\ell)$. 

For a fixed (notationally suppressed) positive integer $n$, the average value of $\#\fc\inv(a)$, as $a$ ranges over $A(\integ_\ell/\ell^n)$, is
\[
\#G(\integ_\ell/\ell^n)/\#A(\integ_\ell/\ell^n).
\]
Consequently, we set
\begin{align}
\label{eqdeffinitegekeler}
\nu_{\ell,n}(a,q) = \nu_{\ell,n}(\gamma_0) &= 
\frac{\#\st{ \gamma \in G(\integ_\ell/\ell^n): \gamma \twiddle (\gamma_0 \bmod \ell^n)}}{\#G(\integ_\ell/\ell^n)/\#A(\integ_\ell/\ell^n)},
\intertext{
and rewrite \eqref{eqgekeler} (and extend it to the case of $\ff_q$) as}
\label{eqnewgekeler}
\nu_\ell(a,q) &= \lim_{n\ra\infty} \nu_{\ell,n}(a,q).
\end{align}
Here, we have exploited the fact that two semisimple elements of $\gl_2$ are
conjugate if and only if their characteristic polynomials are the
same.  Note that the denominator of \eqref{eqnewgekeler} coincides with that of Gekeler's definition \cite{gekeler03}*{(3.7)}. 
Indeed, 
\begin{equation}\label{eq:denom}
\#G(\integ/\ell^n)/\#A(\integ/\ell^n) = \frac{\ell(\ell-1)(\ell^2-1)\ell^{4n-4}}{(\ell-1)\ell^{n-1}\ell^n} = (\ell^2-1)\ell^{2n-2}.
\end{equation}

For $\ell=p$, $\gamma_0$ lies in $\gl_2(\rat_p)\cap \mat_2(\integ_p)$.  We make the apparently ad hoc definition
\begin{equation}
\label{eqnewgekeler-p}
\nu_p(a,q) = \lim_{n\ra\infty} \frac{\#\st{ \gamma \in
    \mat_{2}(\integ_p/p^n): \gamma \twiddle (\gamma_0 \bmod
    p^n)}}{\#G(\integ_p/p^n)/\#A(\integ_p/p^n)},
\end{equation}
where we have briefly used $\twiddle$ to denote similarity of matrices under the action of $\GL_2(\integ_p/p^n)$.  In the case where $q=p$, this recovers Gekeler's definition \eqref{eqgekeler-p}.

Finally, we follow \cite[(3.3)]{gekeler03} and, inspired by the Sato-Tate measure, define an archimedean term
\begin{equation}
\label{eqgekeler-infinity}
\nu_\infty(a,q) = \frac 2\pi\sqrt{1-\frac{a^2}{4q}}.
\end{equation}

\subsection{The Langlands and Kottwitz approach}\label{sub:lk}

For Shimura varieties of PEL type, Kottwitz proved \cite{kottwitz92}
Langlands's conjectural expression of the zeta function of that
Shimura variety in terms of automorphic L-functions on the associated
group.  A key, albeit elementary, tool in this proof is the fact that
the isogeny class of a (structured) abelian variety can be expressed
in terms of an orbital integral.  The special case where the Shimura
variety in question is a modular curve, so that the abelian varieties
are simply elliptic curves, has enjoyed several detailed presentations
in the literature (e.g., \cite{clozel:bourbaki}, \cite{scholze:lk}
and, to a lesser extent, \cite{achtercunningham:orb}), and so we
content ourselves here with the relevant statement.

As in \S \ref{sub:defisog}, fix data $(a,q)$ which determines an isogeny class of
ordinary elliptic curves over $\ff_q$, and let $\gamma_0 \in G(\rat)$ be a suitable choice.  If $E \in I(a,q)$, then for each
$\ell \nmid q$ there is an isomorphism $H^1(E_{\bar\ff_q},\rat_\ell)
\iso \rat_\ell^{\oplus 2}$ which takes the Frobenius endomorphism of
$E$ to $\gamma_0$.

There is an additive operator $F$ on
$H^1_{\operatorname{cris}}(E,\rat_q)$. 
   It is
$\sigma$-linear, in the sense that if $a \in \rat_q$ and $x \in
H^1_{\operatorname{cris}}(E,\rat_q)$, then $F(ax) = a^\sigma F(x)$.
To $F$ corresponds some $\delta_0 \in G(\rat_q)$, well-defined up to
$\sigma$-conjugacy.  (Recall that $\delta$ and $\delta'$ are
$\sigma$-conjugate if there exists some $h \in G(\rat_q)$ such that
$h\inv \delta h^\sigma = \delta'$.)
The two elements are related by
$\operatorname{N}_{\rat_q/\rat_p}(\delta_0) \twiddle \gamma_0$.

Let $G_{\gamma_0}$ be the centralizer of $\gamma_0$ in $G$.  Let $G_{\delta_0\sigma}$ be the twisted centralizer of $\delta_0$ in $G_{\rat_q}$; it is an algebraic group over $\rat_p$.

Finally, let $\aff^p_f$ denote the prime-to-$p$ finite adeles, and let $\hat\integ^p_f\subset \aff^p_f$ be the subring of everywhere-integral elements.  With these notational preparations, we have

\begin{proposition}
\label{proplk}
The weighted cardinality of an ordinary isogeny class of elliptic curves is given by
\begin{multline}
\label{eqlk}
\wnum I(a,q) = \\\vol(G_{\gamma_0}(\rat)\bs G_{\gamma_0}(\aff_f)) \cdot \int_{G_{\gamma_0}(\aff^p_f)\bs G(\aff^p_f)} \one_{G(\hat\integ^p_f)}(g\inv \gamma_0 g)\, dg \cdot \int_{G_{\delta_0\sigma}(\rat_p)\bs G(\rat_q)} \one_{G(\integ_q)\left(\begin{smallmatrix}1&0\\0&p\end{smallmatrix}\right) G(\integ_q)}(h\inv \delta_0 h^\sigma)\, dh.
\end{multline}
\end{proposition}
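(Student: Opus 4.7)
The plan is to reprove the standard Langlands--Kottwitz decomposition by parametrizing the isogeny class $I(a,q)$ via linear-algebraic data at each place of $\rat$, following the arguments recorded in \cite{clozel:bourbaki} and \cite{scholze:lk}. First I would fix a base curve $E_0\in I(a,q)$ together with trivializations $H^1(E_{0,\bar\ff_q},\rat_\ell)\iso\rat_\ell^{\oplus 2}$ for every $\ell\neq p$ and $H^1_\cris(E_0,\rat_q)\iso\rat_q^{\oplus 2}$, chosen so that geometric Frobenius corresponds to $\gamma_0$ (respectively to $\delta_0\sigma$). Any $E\in I(a,q)$ admits a quasi-isogeny $\phi$ from $E_0$, and via $\phi$ the rational cohomologies are identified, while the integral cohomology of $E$ becomes a $\hat\integ^p_f$-lattice $L^p_E\subset(\aff^p_f)^{\oplus 2}$ stable under $\gamma_0$, together with a $\sigma$-stable $\integ_q$-lattice $M_E\subset\rat_q^{\oplus 2}$. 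Tate's theorem (away from $p$) and Serre--Tate with covariant Dieudonn\'e theory (at $p$) turn this recipe into a bijection between isomorphism classes of pairs $(E,\phi)$ and tuples of such lattices.

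The next step is to translate the lattice conditions into group theory. The set of $\gamma_0$-stable $\hat\integ^p_f$-lattices in $(\aff^p_f)^{\oplus 2}$ is parametrized by cosets $g\in G(\aff^p_f)/G(\hat\integ^p_f)$ satisfying $g\inv\gamma_0 g\in G(\hat\integ^p_f)$, so integrating $\one_{G(\hat\integ^p_f)}(g\inv\gamma_0 g)$ over $G_{\gamma_0}(\aff^p_f)\bs G(\aff^p_f)$ enumerates such lattices modulo the action of the centralizer, producing the prime-to-$p$ factor of \eqref{eqlk}. At $p$, the ordinarity of $E$ forces the Hodge filtration on $M_E$ to be of type $(1,0)$, i.e.\ $pM_E\subset FM_E\subset M_E$ with both successive quotients of $\integ_q$-length one; writing $M_E=h\cdot\integ_q^{\oplus 2}$, this becomes $h\inv\delta_0 h^\sigma\in G(\integ_q)\diag(1,p)G(\integ_q)$, which is exactly the twisted orbital integrand.

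To reassemble globally, one observes that the trivialization of $E_0$ was unique only up to the diagonal action of $G_{\gamma_0}(\rat)$: changing it translates the adelic lattice data by some element of $G_{\gamma_0}(\rat)$. Thus isomorphism classes of pairs $(E,\phi)$ are indexed by a $G_{\gamma_0}(\rat)$-quotient of the adelic orbit data, and the forgetful map $(E,\phi)\mapsto E$ has fibres of cardinality $\#\Aut(E)$. Summing $1/\#\Aut(E)$ over $I(a,q)$ therefore absorbs the $G_{\gamma_0}(\rat)$-stabilizer, converting the double-coset count into the volume factor $\vol(G_{\gamma_0}(\rat)\bs G_{\gamma_0}(\aff_f))$ multiplied by the product of local orbital integrals, as desired.

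The main obstacle is the analysis at $p$: one must identify the twisted centralizer $G_{\delta_0\sigma}$ with an inner form of $G_{\gamma_0}$ over $\rat_p$ (so that the $\rat_p$-component of the global volume pairs correctly with the twisted orbital integral), verify that the ordinary Newton stratum is cut out exactly by the double coset $G(\integ_q)\diag(1,p)G(\integ_q)$ corresponding to the minuscule coweight of $\gl_2$, and normalize measures compatibly between the prime-to-$p$ and $p$-adic factors. All three steps are routine given Serre--Tate and the standard dictionary between $F$-isocrystals and $\sigma$-conjugacy classes, which is why we are content to quote the proof from the cited references.
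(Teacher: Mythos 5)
Your proposal is correct and follows essentially the same route as the paper, which likewise treats Proposition \ref{proplk} as the standard Langlands--Kottwitz lattice-counting argument and defers the details to \cite{clozel:bourbaki}: quasi-isogenies from a base curve give $\gamma_0$-stable lattices away from $p$ and Dieudonn\'e lattices at $p$, the rational centralizer $K\units \iso G_{\gamma_0}(\rat)$ acts on this lattice data, and the automorphism-weighted (groupoid) count becomes the volume factor times the orbital and twisted orbital integrals. The only cosmetic difference is at $p$, where the paper states the local condition as stability of the lattice under $\delta_0$ and $p\delta_0\inv$ (which is what directly yields the double coset $G(\integ_q)\diag(1,p)G(\integ_q)$, independently of ordinarity), rather than your phrasing via the Hodge filtration.
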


Here, each group $G(\rat_\ell)$ has been given the Haar measure which
assigns volume one to $G(\integ_\ell)$ (this is the so-called \emph{canonical measure}, see \S\ref{subsub:gross}).  The choice of nonzero Haar
measure on the centralizer $G_\gamma(\rat_\ell)$ is irrelevant, as
long as the same choice is made for the global volume computation.  Similarly, in the second, twisted orbital integral, $G(\rat_q)$ is given the Haar measure which assigns volume one to $G(\integ_q)$.
Since we shall need to able to say something about the volume term later, we need to fix the measures on  
$G_{\gamma_0}(\Q_\ell)$ for every $\ell$.
We choose the canonical measures $\mu^\can$ on both $G$ and $G_{\gamma_0}$ at every place. These measures are
defined below in \S\ref{subsub:gross}.

The idea behind Proposition \ref{proplk} is straight-forward.  (We defer to \cite{clozel:bourbaki} for details.)  Fix $E \in I(a,q)$ and $H^1(E_{\bar\ff_q},\rat_\ell) \iso
\rat_\ell^{\oplus 2}$ as above.  This singles out an integral
structure $H^1(E_{\bar\ff_q},\integ_\ell) \subseteq \rat_\ell^{\oplus
  2}$.  If $E'$ is any other member of $I(a,q)$, then the prime-to-$p$
part of an $\ff_q$-rational isogeny induces $E \ra E'$ gives a new
integral structure $H^1(E'_{\bar\ff_q},\integ_\ell)$ on
$\rat_\ell^{\oplus 2}$.  Similarly, $p$-power isogenies give rise to new integral structures on the crystalline cohomology $H^1_\cris(E,\rat_q)$.  In this way, $I(a,q)$ is identified with $K\units \backslash Y^p \cross Y_p$, where $Y^p$ ranges among $\gamma_0$-stable lattices in $Y^1(E_{\bar\ff_q},\aff^p)$, and $Y_p$ ranges among lattices in $H^1_\cris(E,\rat_q)$ stable under $\delta_0$ and $p\delta_0\inv$.  It is now straight-forward to use an orbital integral to calculate the automorphism-weighted, or groupoid, cardinality of the quotient set $K\units \backslash Y^p\cross Y_p$ (e.g., \cite[\S 6]{hales:bourbaki}).

We remark that most expositions of Proposition
\ref{proplk} refer to a geometric context in which
$\one_{G(\hat\integ^p_f)}$ is replaced with the characteristic
function of an open compact subgroup which is sufficiently small that objects have trivial automorphism groups, so that
the corresponding Shimura variety is a smooth and quasiprojective fine moduli space.
However, this assumption is not necessary for the counting argument underlying \eqref{eqlk}; see, for instance, \cite[3(b)]{clozel:bourbaki}.

\section{Comparison of Gekeler numbers with orbital integrals}\label{seclocalcalc}
The calculation is based on the interplay between several $G$-invariant measures on the adjoint orbits in $G$. We start by carefully reviewing the definitions and the normalizations of all Haar measures involved. 

\subsection{Measures on groups and orbits}\label{sub:measures}
Let $\pi_n: \Z_\ell \to \Z_\ell/\ell^n$ be the truncation map. 
For any $\Z_\ell$-scheme $\cX$, we denote by $\pi_n^{\cX}$ the corresponding map 
$$\pi_n^\cX: \cX(\Z_\ell)\to \cX(\Z_{\ell}/\ell^n)$$ induced by $\pi_n$. 

Once and for all, fix the Haar measure on $\A^1(\Q_\ell)$ such that the volume of $\Z_\ell$ is $1$. 
We will denote this measure by $dx$. 
The key observation that will play a role in our calculations is that with this normalization, the fibres of the standard projection 
$\pi_n^{\A^d}:\A^d(\Z_\ell)\to \A^d(\Z/\ell^n \Z)$ have volume $\ell^{-nd}$.

We also observe that there are two approaches to normalizing a Haar 
measure on the set of $\Q_\ell$-points of an arbitrary algebraic group $G$: one can either fix a maximal compact subgroup and assign volume $1$ to it; or one can fix 
a volume form  $\omega_G$ on $G$ with coefficients in $\Z$, and thus get the measure 
$\abs{\omega_G}_\ell$  on each $\bG(\Q_\ell)$. 
Additionally, the measure that will be the most useful for us to study Gekeler-type ratios is the so-called Serre-Oesterl\'e measure (which turns out to be the measure that B. Gross calls \emph{canonical} in \cite{gross:motive}, but which is different from, though closely related to, the measure that we call canonical here).

\subsubsection{Serre-Oesterl\'e measure}\label{subsub:SO}
Let $\cX$ be a smooth scheme over $\Z_\ell$. 
Then there is the so-called Serre-Oesterl\'e measure on $X$, which we will denote by 
$\mu_X^\serre$. 
It is defined in \cite{serre:chebotarev}*{\S 3.3}, see also \cite{veys:measure} for an attractive equivalent definition. For a smooth scheme that has a non-vanishing gauge form this definition coincides with the definition of A. Weil \cite{weil:adeles}, and 
by \cite{weil:adeles}*{Theorem 2.2.5} 
(extended by Batyrev \cite{batyrev:calabi-yau}*{Theorem 2.7}),
this measure has the property that 
$\vol^\serre(\cX(\Z_\ell))=\#\cX(\F_\ell)\ell^{-d}$, where $d$ is the dimension of the generic fiber of $\cX$.
In particular,
$\mu^\serre_{\A^1}$ is the Haar measure on the affine line such that 
$\vol^\serre(\A^1(\Z_\ell))=\ell \ell^{-1}=1$, i.e., $\mu^\serre_{\A^1(\Q_\ell)}$ coincides with 
$\abs{dx}_\ell$. Similarly, on any $d$-dimensional affine space 
$\A^d$, the Serre-Oesterl\'e measure gives $\A^d(\Z_\ell)$ volume $1$. 

The algebraic group $\gl_2$ is a smooth group scheme defined over $\Z$; in particular, for every 
$\ell$, 
$\gl_2\times_{\Z}\Z_\ell$ is a smooth scheme over $\Z_\ell$, so  
$\mu^\serre$ gives $\gl_2(\Z_\ell)$ volume 
$$\mu^\serre(\gl_2(\Z_\ell))=\frac{\# \gl_2(\F_\ell)}{\ell^d} =\frac{\ell(\ell-1)(\ell^2-1)}{\ell^4}.$$

\subsubsection{The canonical measures}\label{subsub:gross}
Let $G$ be a reductive group over $\Q_\ell$; then Gross \cite[Sec.\ 4]{gross:motive}
constructs a canonical volume form $\omega_G$, which does not vanish
on the special fibre of the canonical model $\underline
G$ over $\integ_\ell$.  If $G$ is unramified over $\rat_\ell$, then $\omega_G$
recovers the Serre-Oesterl\'e measure, insofar as 
\[
\int_{\underline{G}(\integ_\ell)} \abs{\omega_G}_\ell =
\frac{\#\underline{G}^\kappa(\ff_\ell)}{\ell^{\dim G}}, 
\]
where $\underline{G}^\kappa$ is the closed fibre of $\underline G$ \cite[Prop.\ 4.7]{gross:motive}.

The measure most commonly used in the calculation of orbital integrals, traditionally denoted by 
$\mu^\can$, is closely related to $\abs{\omega_G}$.
By definition, $\mu^\can$ is normalized by giving volume $1$ to the canonical compact subgroup 
$\underline G(\Z_\ell)$. 
For an unramified group $G$, 
this is a hyperspecial maximal compact subgroup $\underline G(\Z_\ell)$. 
Thus, for $G=\gl_2$, 
we have:
$\mu^\can(G(\Z_\ell))=1$. 

We will also need to understand the canonical measures for tori in $G$, which arise as the centralizers of semisimple elements. 
For a torus $T$, by definition, 
$\mu_T^\can(\underline{T}^\circ(\Z_\ell))=1$, where
$\underline{T}^\circ$ is the connected component of the canonical
integral model for $T$ (which will be discussed in more detail in \S \ref{sub:canonical}
below).

\subsubsection{The geometric measure}\label{subsub:steinberg} 
We will use a certain quotient measure $\mu^\geom$ on the orbits, which is called the geometric measure in \cite {langlands-frenkel-ngo}.
This measure is defined using the Steinberg map $\fc$ \eqref{diagsteinberg}; we return to the setting of \S \ref{sub:steinberg}.

For a general reductive group $G$ and $\gamma\in G(\Q_\ell)$ regular semisimple, the fibre over $\fc(\gamma)$ 
is the stable orbit of $\gamma$, which is a finite union of rational orbits. 
In our setting with $G=\GL_2$, the fibre $\fc^{-1}(\fc(\gamma))$ is a single rational orbit, which substantially simplifies the situation. From here onwards, we work only with $G=\GL_2$.  

Consider the measure given by the form $\omega_G$ on $G$, and 
the measure on $A = \A^{1}\times \Gm$ which is the product of the measures associated with the form 
$dt$ on $\A^{1}$ and $ds/s$ on $\Gm$, where we  denote the coordinates on $A$ by $(t,s)$. 
We will denote this measure by $\abs{d\omega_A}$.
 
The form $\omega_G$ is a generator of the top exterior power of  the 
cotangent bundle of $G$.
For each orbit $\fc^{-1}(t,s)$ (note that such an orbit is a variety) there is a unique generator $\omega^{\geom}_{\fc(\gamma)}$ of the top exterior power of the cotangent bundle on the orbit $\fc^{-1}(\fc(\gamma))$ such that 
$$\omega_G=\omega^{\geom}_{\fc(\gamma)}\wedge \omega_{A}.$$
Then for any $\phi\in C_c^\infty(G(\Q_\ell))$,
$$
\int_{G(\Q_\ell)} \phi(g)\, d\abs{\omega_G}=\int_{A(\Q_\ell)}\int_{\fc^{-1}(\fc(\gamma))}\phi(g)\,d\abs{\omega^{\geom}_{\fc(\gamma)}} 
\,\abs{d\omega_A(t,s)}.$$
This measure also appears in \cite{langlands-frenkel-ngo}, and it is discussed in detail in \S \ref{sec:comparison} below. 

\subsubsection{Orbital integrals}\label{subsub:oi}
There are two kinds of orbital integrals that will be relevant for us; they differ only in the normalization of measures on the orbits. 
Let $\gamma$ be a regular semisimple element of $G(\Q_\ell)$, and let $\phi$ be a locally constant compactly supported function on $G(\Q_\ell)$. 
Let $T$ be the centralizer $G_\gamma$ of $\gamma$. 
Since $\gamma$ is regular (i.e., the roots of the characteristic polynomial of $\gamma$ are distinct) and semisimple, $T$ is a maximal torus in $G$.

First,  we consider the orbital integral with respect to the geometric measure:
\begin{defn} Define $O^{\geom}_{\gamma}(\phi)$ by 
\begin{equation*}
O^{\geom}_{\gamma}(\phi) :=\int_{T(\rat_\ell)\backslash G(\Q_\ell)} \phi(g^{-1}\gamma g) d\mu^{\geom}_{\gamma},
\end{equation*}
where $\mu^{\geom}_{\gamma}$ is the measure on the orbit of $\gamma$ associated with the corresponding differential form 
$\omega^{\geom}_{\fc^{-1}(\fc(\gamma))}$ 
as in  \S\ref{subsub:steinberg} above. 
\end{defn} 

Second, there is the canonical orbital integral over the orbit of $\gamma$, defined as follows. 
The orbit of $\gamma$ can be identified with the quotient
$T(\rat_\ell)\backslash G(\Q_\ell)$.  Both $T(\rat_\ell)$ and $G(\Q_\ell)$ are endowed with canonical measures, as above in \S\ref{subsub:gross}. Then there is unique quotient measure on $T(\rat_\ell)\backslash G(\Q_\ell)$, which
will be denoted $d\mu_\gamma^\can$. 
The canonical orbital integral will be the integral with respect to this measure on the orbit 
(also considered as  a distribution on the space of locally constant compactly supported functions on $G(\Q_\ell)$): 
\begin{defn} Define $O_{\gamma}^\can(\phi)$ by 
\begin{equation*}
O_{\gamma}^{\can}(\phi) :=\int_{T(\rat_\ell)\backslash G(\Q_\ell)} \phi(g^{-1}\gamma g) d\mu_\gamma^\can.
\end{equation*}
\end{defn}

By definition, the distributions $O^{\geom}_\gamma$ and $O_\gamma^\can$ differ by a multiple that is 
a function of $\gamma$. This ratio (which we feel should probably be well-known but was hard to find in the literature, see also \cite{langlands-frenkel-ngo}) is computed in \S \ref{sec:comparison} below.

We will first relate Gekeler's ratios to orbital integrals with respect to the geometric measure, in a natural way, and from there will get the relationship with the canonical orbital integrals, which are more convenient to use for the purposes of computing the global volume term appearing the formula of Langlands and Kottwitz.

\subsection{Gekeler numbers and volumes, for $\ell$ not equal to $p$}\label{sub:gek_p_not_l}
From now on, $G=\gl_2$, $\gamma_0 =\gamma_{a,q}$, and $\ell$ is a fixed prime distinct from $p$.
Our first goal is to relate the  Gekeler
number $\nu_\ell(a,q)$  \eqref{eqnewgekeler} 
to an orbital integral  $O^{\geom}_{\gamma_0}(\phi_0)$ of a suitable test function  $\phi_0$
 with respect to $d\abs{\omega^{\geom}_{\fc(\gamma)}}$. (Recall that $\gamma_0$ is the element of $G(\Q_\ell)$ determined by 
 $E$, and in this case since $\ell\neq p$, it lies in $G(\Z_\ell)$.)
 In order to do this we define natural subsets of $G(\Q_\ell)$ whose volumes are responsible for this relationship.
 
Recall \eqref{eqdeffinitegekeler} the definition of $\nu_{\ell,n}(\gamma_0)$.
For each positive integer $n$, consider the subset $V_n$ of $\GL_2(\Z_\ell)$ defined as 
\begin{align}\label{eq:subsets}
V_n= V_n(\gamma_0)&:=\st{\gamma\in \GL_2(\Z_\ell)\mid f_\gamma(T)\equiv f_0(T) \bmod  \ell^n} \\
&= \st{ \gamma \in \gl_2(\integ_\ell) \mid \pi_n^A(\fc(\gamma)) = \pi_n^A(\fc(\gamma_0))}.
\intertext{and set}
V(\gamma_0) &:= \cap_{ n \ge 1} V_n(\gamma_0).
\end{align}

We define an auxiliary ratio:
\begin{equation}\label{eq:vn}
v_n(\gamma_0):= \frac{\vol_{\mu_{GL_2}^\serre}(V_n(\gamma_0))}{\ell^{-2n}}.
\end{equation}

Now we would like to  relate the limit of these 
ratios $v_n(\gamma_0)$ both to the limit of Gekeler  ratios $\nu_{\ell,n}(\gamma_0)$ 
and to an orbital integral.

Let $\phi_0 = \one_{\gl_2(\integ_\ell)}$ be the characteristic function of the maximal compact subgroup
$\GL_2(\Z_\ell)$ in $\GL_2(\Q_\ell)$. 

\begin{proposition}\label{prop:st.oi}
We have
\[
\lim_{n\to \infty} v_n(\gamma_0)= 
O_{\gamma_0}^{\geom}(\phi_0).
\]
\end{proposition}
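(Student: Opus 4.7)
The plan is to interpret $\vol_{\mu^\serre}(V_n)$ via the Steinberg factorization formula from \S\ref{subsub:steinberg} and extract the orbital integral in the limit. The first input I need is that on $G = \gl_2$ the Serre--Oesterl\'e measure $\mu^\serre$ coincides with the measure $\abs{\omega_G}_\ell$ attached to Gross's canonical top form; this follows because $\gl_2$ is smooth and unramified over $\Z_\ell$, as recalled in \S\ref{subsub:gross}, and only then is the Steinberg factorization formula directly applicable to $V_n \subset \gl_2(\Z_\ell)$.

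First I would rewrite $V_n = \fc^{-1}(U_n) \cap G(\Z_\ell)$, where $U_n := (\pi_n^A)^{-1}(\pi_n^A(\fc(\gamma_0)))$, so that $\one_{V_n} = \phi_0 \cdot (\one_{U_n}\circ \fc)$. Applying the factorization formula of \S\ref{subsub:steinberg} to this indicator function gives
\[
\vol_{\mu^\serre}(V_n) \;=\; \int_{U_n} \Bigl(\int_{\fc^{-1}(t,s)} \phi_0 \, d\abs{\omega^{\geom}_{(t,s)}}\Bigr)\, \abs{d\omega_A(t,s)},
\]
and the inner integral is exactly $O^{\geom}_{\gamma}(\phi_0)$ for any representative $\gamma$ of the rational orbit $\fc^{-1}(t,s)$, by definition of the geometric orbital integral.

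Next I would compute $\abs{d\omega_A}(U_n) = \ell^{-2n}$: since $\ell \neq p$ and $\det\gamma_0 = q \in \Z_\ell^\times$, every $(t,s) \in U_n$ satisfies $s \equiv q \bmod \ell^n$ and hence $\abs{s}_\ell = 1$, so on $U_n$ the form $\abs{dt \wedge ds/s}_\ell$ reduces to the standard product Haar measure, and $U_n$ is a fibre of $\pi_n^A: A(\Z_\ell) \to A(\Z/\ell^n)$ of measure $\ell^{-n}\cdot\ell^{-n}$ by the normalization recorded in \S\ref{sub:measures}.

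Finally I would invoke the local constancy of the function $(t,s) \mapsto O^{\geom}_{\gamma_{(t,s)}}(\phi_0)$ on the regular semisimple locus of $A(\Q_\ell)$. Since $\gamma_0$ is regular semisimple (its characteristic polynomial has distinct roots by the ordinary Newton polygon argument of \S\ref{sub:defisog}) and $U_n$ shrinks to $\fc(\gamma_0)$, for $n$ sufficiently large the integrand is constant on $U_n$ with value $O^{\geom}_{\gamma_0}(\phi_0)$, so $\vol_{\mu^\serre}(V_n) = \ell^{-2n}\cdot O^{\geom}_{\gamma_0}(\phi_0)$ and dividing by $\ell^{-2n}$ yields the claim. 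The main delicate point will be this local constancy, since it is what allows the limit to be taken inside the integral; for $G = \GL_2$ it simplifies greatly because each fibre of $\fc$ over a regular semisimple point is a single rational orbit, and any general-purpose appeal to Harish-Chandra's continuity theorem can be replaced by a direct argument from Hensel's lemma, ensuring nearby characteristic polynomials are achieved by $\GL_2(\Z_\ell)$-conjugates of $\gamma_0$.
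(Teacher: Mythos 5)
Your argument is correct and is essentially the paper's own proof: both identify $V_n=\fc^{-1}(U_n)\cap\GL_2(\Z_\ell)$, use $\mu^\serre_{\GL_2}=\abs{\omega_G}_\ell$ on $\GL_2(\Z_\ell)$, compute $\vol_{\abs{d\omega_A}}(U_n)=\ell^{-2n}$, and pass to the limit via the disintegration defining the geometric measure — you simply make explicit the local constancy of the fibre integrals that the paper compresses into the phrase ``by definition of the geometric measure.'' The only point to repair is your final sentence: $\GL_2(\Z_\ell)$-conjugates of $\gamma_0$ have the \emph{same} characteristic polynomial as $\gamma_0$, so the Hensel/smoothness argument should instead be phrased as giving a measure-compatible local triviality of $\fc$ over a small neighbourhood of $\fc(\gamma_0)$ in the regular semisimple locus (equivalently, local constancy of $(t,s)\mapsto\int_{\fc^{-1}(t,s)}\phi_0\,d\abs{\omega^{\geom}_{(t,s)}}$), which is exactly what lets you pull the constant $O^{\geom}_{\gamma_0}(\phi_0)$ out of the integral over $U_n$ for large $n$.
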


\begin{proof}
Because equality of characteristic polynomials is equivalent to conjugacy in $\gl_2(\rat_\ell)$, $V(\gamma_0)$ is the intersection of $\gl_2(\integ_\ell)$ with the orbit $\cO(\gamma_0)$ of $\gamma_0$ in $G=\GL_2(\Q_\ell)$. 
Then the orbital integral
$O_{\gamma_0}^{\geom}(\phi_0)$
is nothing but the volume of the set $V(\gamma_0)$, as a subset of $\cO(\gamma_0)$, with respect to the 
measure $d\mu^{\geom}_{\gamma_0}$.

Let $a_0=\fc(\gamma_0)= (a,q)\in \A^1\times \Gm(\Q_\ell)$, and let $U_n(a_0)$ be its $\ell^{-n}\times \ell^{-n}$-neighborhood.  Its Serre-Oesterle volume is $\vol_{\mu^\serre_A}(U_n(\gamma_0)) = \ell^{-2n}$.

Moreover, $V_n(\gamma_0) = \fc\inv(U_n(\gamma_0)) \cap \gl_2(\integ_\ell)$.  Consequently,
\begin{equation}\label{eq:st.volume}
\begin{aligned}
\lim_{n\to \infty}v_n(\gamma_0)&=\lim_{n\to \infty}\frac{\vol_{\mu^\serre_{\GL_2}}(\fc^{-1}(U_n(\gamma_0)) \cap \GL_2(\Z_\ell))}{\vol_{\mu^\serre_A}(U_n(\gamma_0))} \\
&=\lim_{n\to \infty}\frac{\vol_{|d\omega_G|}(\fc^{-1}(U_n(\gamma_0)) \cap \GL_2(\Z_\ell))}{\vol_{|d\omega_A|}(U_n(\gamma_0))}
=\vol_{\mu^{\geom}_{\gamma_0}}(V(\gamma_0)),
\end{aligned}
\end{equation} 
by  definition of the geometric measure.  
\end{proof}

Next, let us relate the ratios $v_n$ to the Gekeler ratios.

\begin{proposition}\label{prop:gek.volume}
We have the relation
$$
\lim_{n\to \infty} v_n(\gamma_0)=\frac{\#\SL_2(\F_\ell)}{\ell^3}\lim_{n\to\infty}\nu_{\ell, n}(\gamma_0) = \frac{\ell^2-1}{\ell^2}\nu_\ell(a,q).
$$ 
\end{proposition}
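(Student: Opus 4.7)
The idea is to recognize $V_n(\gamma_0)$ as the preimage, under truncation, of the ``same characteristic polynomial'' set that appears in the numerator of $\nu_{\ell,n}(\gamma_0)$, and then to match the Serre--Oesterl\'e volume normalization against the Gekeler denominator.

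First I would observe that $\gl_2$ is an open subscheme of $\mathrm{M}_2\cong \A^4$, so $\mu^\serre_{\gl_2}$ is nothing but the restriction of the standard measure $|dx_1\wedge\cdots\wedge dx_4|$; this matches the check $\mu^\serre(\gl_2(\Z_\ell))=\#\gl_2(\F_\ell)/\ell^4$ recorded in \S\ref{subsub:SO}. In particular, every nonempty fiber of the truncation map $\pi_n^{\gl_2}:\gl_2(\Z_\ell)\to\gl_2(\Z_\ell/\ell^n)$ has Serre--Oesterl\'e volume $\ell^{-4n}$. Second, by construction
\[
V_n(\gamma_0)=(\pi_n^{\gl_2})^{-1}(S_n),\qquad S_n:=\{\bar\gamma\in\gl_2(\Z_\ell/\ell^n):f_{\bar\gamma}(T)\equiv f_0(T)\bmod\ell^n\},
\]
and, as noted in the discussion following \eqref{eqnewgekeler}, $S_n$ coincides with the set of $\bar\gamma\in \gl_2(\Z_\ell/\ell^n)$ conjugate to $\gamma_0\bmod \ell^n$ (i.e.\ with the numerator of $\nu_{\ell,n}(\gamma_0)$). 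Combining these two observations,
\[
\vol_{\mu^\serre_{\gl_2}}(V_n(\gamma_0))=\#S_n\cdot\ell^{-4n},\qquad\text{so}\qquad v_n(\gamma_0)=\#S_n\cdot \ell^{-2n}.
\]

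Next I would substitute the denominator computation \eqref{eq:denom}: from the definition \eqref{eqdeffinitegekeler},
\[
\#S_n=(\ell^2-1)\ell^{2n-2}\cdot\nu_{\ell,n}(\gamma_0),
\]
so $v_n(\gamma_0)=\frac{\ell^2-1}{\ell^2}\,\nu_{\ell,n}(\gamma_0)$. Passing to the limit using \eqref{eqnewgekeler}, and noting $\#\SL_2(\F_\ell)/\ell^3=\ell(\ell^2-1)/\ell^3=(\ell^2-1)/\ell^2$, yields both forms of the claimed equality.

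The computation itself is routine, so there is no real obstacle; the one point requiring care is the normalization. Specifically, one must use that $\mu^\serre_{\gl_2}$ is the \emph{open-immersion} measure from $\A^4$ (not the measure $|\det|^{-2}\,dx$ that often appears in Tamagawa-measure conventions), because it is exactly this normalization that produces the fiber volume $\ell^{-4n}$ and the correction factor $\#\SL_2(\F_\ell)/\ell^3$ reconciling the ``volume-type'' ratio $v_n$ with the ``probability-type'' ratio $\nu_{\ell,n}$.
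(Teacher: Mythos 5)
Your argument is correct and takes essentially the same route as the paper: identify $V_n(\gamma_0)=\pi_n^{-1}(S_n)$, use that fibres of the truncation map have Serre--Oesterl\'e volume $\ell^{-4n}$ (the paper invokes smoothness of $\gl_2$ over $\integ_\ell$; your appeal to openness of $\gl_2$ in $\A^4$ amounts to the same thing here), and then divide by the Gekeler denominator via \eqref{eq:denom}. The only step you gloss over is that each class in $S_n$ actually lifts to $\GL_2(\Z_\ell)$ (surjectivity of $\pi_n$, again by smoothness/Hensel), which is what upgrades ``every nonempty fibre has volume $\ell^{-4n}$'' to the exact equality $\vol_{\mu^\serre_{\gl_2}}(V_n)=\#S_n\cdot\ell^{-4n}$ rather than an inequality.
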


\begin{proof}
Let $\pi_n = \pi_n^{\gl_2}:\GL_2(\Z_\ell)\to \GL_2(\Z/\ell^n)$.  To ease notation slightly, let $V_n = V_n(\gamma_0)$.  
Let $S_n\subset \GL_2(\Z/\ell^n \Z)$ be the set that appears in the numerator \eqref{eqdeffinitegekeler}:
$$S_n:= \st{\gamma\in \GL_2(\Z/\ell^n)\mid f_\gamma(T)\equiv f_0(T)\bmod \ell^n}.$$
We claim that, for large enough $n$ (depending on the discriminant of $f$), the following hold:
\begin{enumerate}
\def\theenumi{\roman{enumi}}
\item $V_n=\pi_n^{-1}(S_n)$ and $\pi_n\vert V_n :V_n \to S_n$ is surjective;
\item 
We have the equality
\begin{equation}\label{eq:key}
\vol_{\mu_{\GL_2}^\serre}(V_n)=\ell^{-4n}\#S_n. 
\end{equation}
\end{enumerate}
The first claim is needed to establish the second one; and provided the second  claim holds, we get
(where the denominator of Gekeler's ratio is handled as in (\ref{eq:denom}) above):
\begin{equation}\label{eq:main}
v_n(\gamma_0) =\frac{\ell^{-4n} \#S_n}{\ell^{-2n}}= \frac{\#S_n}{\ell^{2n}}
=\frac{\#S_n \#\SL_2(\F_\ell)}{\#\SL_2(\F_\ell)\ell^{3(n-1)}\ell^{-n}\ell^3} =   
\frac{\#\SL_2(\F_\ell)}{\ell^3}\nu_{\ell,n}(\gamma_0),
\end{equation}
as required. 
Thus, it remains to verify the two claims. 

{\bf (i).}  Taking characteristic polynomials commutes with reduction $\bmod \ell^n$, since the coefficients of the characteristic polynomial are themselves polynomial in the matrix entries of $\gamma$, and reduction $\mod \ell^n$ is a ring homomorphism. Thus, $\pi_n^{-1}(S_n)=V_n$. 
Since $\gl_2$ is
a smooth scheme over $\integ_\ell$, $\pi_n$ is surjective for all $n$.

{\bf (ii)} 
Since $\GL_2$ is smooth over the residue field $\F_\ell$, all fibres
of $\pi_n$ have volume equal to $\ell^{-4 n}$.
By part (i), the set $V_n$ is a disjoint union of fibres of $\pi_n$, and the number of these fibres is $\#\pi_n(V_n)=\#S_n$. 
Thus, the volume of $V_n$ is exactly $\ell^{-4n}$ times the number of points in the image of the set in the numerator under this projection. 
\end{proof} 

Combining Propositions \ref{prop:st.oi} and \ref{prop:gek.volume}, we
immediately obtain:

\begin{corollary}\label{cor:gekeler-steinberg} 
The Gekeler numbers relate to orbital integrals via 
$$\nu_\ell(a,q)=\frac{\ell^3}{\#\SL_2(\F_\ell)}O^{\geom}_{\gamma_0}(\phi_0).$$
\end{corollary}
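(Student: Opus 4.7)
The plan is essentially to chain the two propositions that precede the corollary. Proposition \ref{prop:st.oi} identifies the limit $\lim_{n\to\infty} v_n(\gamma_0)$ with the geometric orbital integral $O_{\gamma_0}^{\geom}(\phi_0)$, and Proposition \ref{prop:gek.volume} identifies the same limit with the Gekeler number, up to the explicit scalar $\#\SL_2(\F_\ell)/\ell^3 = (\ell^2-1)/\ell^2$. Equating the two expressions for $\lim_{n\to\infty} v_n(\gamma_0)$ and solving for $\nu_\ell(a,q)$ yields the claimed identity.

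Concretely, the only step is to write the chain
\[
O^{\geom}_{\gamma_0}(\phi_0) \;=\; \lim_{n\to\infty} v_n(\gamma_0) \;=\; \frac{\#\SL_2(\F_\ell)}{\ell^3}\,\nu_\ell(a,q),
\]
where the first equality is Proposition \ref{prop:st.oi} and the second is Proposition \ref{prop:gek.volume}. Multiplying both sides by $\ell^3/\#\SL_2(\F_\ell)$ gives the desired formula.

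There is no real obstacle here: the substantive content was already distributed between Propositions \ref{prop:st.oi} and \ref{prop:gek.volume}. The first encodes the geometric interpretation of the limiting ratio of volumes $v_n$ via the Steinberg map and the definition of the geometric measure, while the second handles the combinatorial bookkeeping that relates $v_n$ to Gekeler's $\nu_{\ell,n}$ using smoothness of $\gl_2$ over $\Z_\ell$ (so that fibres of $\pi_n$ have the same volume $\ell^{-4n}$) and the elementary identity \eqref{eq:denom} for $\#G(\Z/\ell^n)/\#A(\Z/\ell^n)$. The corollary is a one-line consequence, so the only task is to state it cleanly.
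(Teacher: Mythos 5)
Your proposal matches the paper exactly: the corollary is stated there as an immediate consequence of combining Propositions \ref{prop:st.oi} and \ref{prop:gek.volume}, which is precisely the chain of equalities you write. No gaps; nothing further is needed.
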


\subsection{$\ell=p$ revisited}

We now consider $\nu_p(a,q)$ in a similar light.  Since
$\det(\gamma_0) = q$, $\gamma_0$ lies in $\mat_2(\integ_p) \cap
\gl_2(\rat_p)$ but  {\em not} in $\gl_2(\integ_p)$, and we must
consequently modify the argument of \S \ref{sub:gek_p_not_l}.

For integers $m$ and $n$, let $\lambda_{m,n} = \begin{pmatrix}
  p^m&0\\0 & p^n\end{pmatrix}$, and let $C_{m,n} = \gl_2(\integ_p)
\lambda_{m,n} \gl_2(\integ_p)$.  The Cartan decomposition for $\gl_2$
asserts that $\gl_2(\rat_p)$ is the disjoint union
\begin{align*}
\gl_2(\rat_p) &= \bigcup_{m\ge n} C_{m,n},
\intertext{so that}
\mat_2(\integ_p) \cap \gl_2(\rat_p) &= \bigcup_{0 \le n \le m} C_{m,n}.
\end{align*}

We now express $\nu_p(a,q)$ as an orbital integral.  Recall that $q =
p^e$.  Since we consider an ordinary isogeny class, 
the element $\gamma_0 \in \GL_2(\Q_p)$ actually can be chosen to have the form 
$\gamma_0=\begin{pmatrix}
  u_1p^e&0\\0 & u_2\end{pmatrix}$, where $u_1, u_2\in \Z_p$ are units, and thus
in particular,  $\gamma_0 \in C_{e, 0}$.

\begin{lemma}\label{lem:l=p,g=1} 
Let $\phi_q$ be the characteristic function of $C_{e, 0} =
\gl_2(\integ_p)\begin{pmatrix} q&0\\0&1\end{pmatrix}
\gl_2(\integ_p)$.  Then
$$\nu_p(a,q)= \frac{p^{3}}{\#\SL_2(\F_p)}O^{\geom}_{\gamma_0}(\phi_q).$$
\end{lemma}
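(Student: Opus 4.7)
The plan is to prove this as a direct analog of Propositions \ref{prop:st.oi} and \ref{prop:gek.volume}, making the necessary adjustments for the fact that $\gamma_0 \in \mat_2(\Z_p) \setminus \gl_2(\Z_p)$. First I would define the appropriate analog of the auxiliary objects of \S\ref{sub:gek_p_not_l}: set
\[
V_n(\gamma_0) := \{\gamma \in \mat_2(\Z_p) : f_\gamma(T) \equiv f_0(T) \bmod p^n\}, \qquad v_n(\gamma_0) := \frac{\vol_{\mu^\serre_{\mat_2}}(V_n)}{p^{-2n}},
\]
where $\mu^\serre_{\mat_2}$ is the standard additive Haar measure on $\mat_2(\Z_p) = \A^4(\Z_p)$ (of total mass $1$). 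The plan is then to establish two statements analogous to Propositions \ref{prop:st.oi} and \ref{prop:gek.volume}, and combine them.

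The first step, corresponding to Proposition \ref{prop:st.oi}, is to verify that $\lim_{n \to \infty} v_n(\gamma_0) = O^{\geom}_{\gamma_0}(\phi_q)$. The key structural observation, which uses the ordinary hypothesis $p \nmid a$ in an essential way, is that for $n$ large enough we have $V_n \subset C_{e,0}$. Indeed, any $\gamma \in V_n$ satisfies $\tr \gamma \equiv a \bmod p$ with $p \nmid a$, so $\gamma \not\equiv 0 \bmod p$, forcing the gcd of the entries of $\gamma$ to be a unit; combined with $v_p(\det \gamma) = e$ (which follows from $\det \gamma \equiv q \bmod p^n$ once $n > e$), this places $\gamma$ in the double coset with elementary divisors $(1, p^e)$. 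Consequently $V(\gamma_0) := \bigcap_n V_n$ is exactly $\cO(\gamma_0) \cap \mat_2(\Z_p) = \cO(\gamma_0) \cap C_{e,0}$, which is the support of $\phi_q$ on the orbit. Applying the geometric measure decomposition $\omega_G = \omega^{\geom}_{\fc(\gamma)} \wedge \omega_A$ to $\phi = \one_{V_n}$ and letting $n \to \infty$ in the resulting fibration of $V_n$ over $U_n(a_0) \subset A(\Q_p)$ then yields the desired limit, once the local densities of $|\omega_G|$ versus $\mu^\serre_{\mat_2}$ on $C_{e,0}$ and of $|d\omega_A|$ versus the additive Haar near $(a,q)$ are properly accounted for.

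The second step, corresponding to Proposition \ref{prop:gek.volume}, is to show $v_n(\gamma_0) = \tfrac{\#\SL_2(\F_p)}{p^3}\, \nu_{p,n}(\gamma_0)$. Let $\pi_n : \mat_2(\Z_p) \to \mat_2(\Z/p^n)$, and let $\bar S_n$ be the similarity class of $\gamma_0 \bmod p^n$ under $\GL_2(\Z/p^n)$, as in \eqref{eqnewgekeler-p}. Because $\bar\gamma_0 \in \mat_2(\F_p)$ has distinct eigenvalues $0$ and $\bar u_2 \ne 0$ (again using $p \nmid a$), a Hensel-type lifting argument shows that, for $n$ large, membership of $\bar\gamma$ in $\bar S_n$ is equivalent to the characteristic polynomial condition $f_{\bar\gamma} \equiv f_0 \bmod p^n$; hence $V_n = \pi_n^{-1}(\bar S_n)$. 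Since $\mat_2$ is smooth (as $\A^4$), each fiber of $\pi_n$ has $\mu^\serre_{\mat_2}$-volume $p^{-4n}$, so $v_n(\gamma_0) = \#\bar S_n / p^{2n}$. The denominator computation \eqref{eq:denom} then converts this into the stated Gekeler ratio.

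Combining the two statements gives the lemma. The main obstacle is the bookkeeping in the first step: unlike the $\ell \ne p$ case where all nontrivial scaling factors vanish since $|\det\gamma_0|_\ell = 1$, here $|\det\gamma_0|_p = q^{-1}$ enters both through the density of the Haar measure $|\omega_G| = |dX|/|\det|^2$ on $C_{e,0}$ and through the density $|d\omega_A| = |dt|\,|ds/s|$ on the Steinberg quotient near $(a,q)$. A secondary point to verify carefully is the equivalence between the similarity description of $\bar S_n$ and the characteristic-polynomial description, which is where the ordinariness of $\gamma_0$ enters in the second step.
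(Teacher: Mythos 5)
Your outline does follow the paper's own route quite closely: the two-step structure parallels Propositions \ref{prop:st.oi} and \ref{prop:gek.volume}, the inclusion $V_n\subset C_{e,0}$ for large $n$ via ordinarity (your gcd-of-entries/elementary-divisor argument is a clean variant of the paper's argument excluding the cosets $C_{m,n}$ with $m,n>0$), the identification $\bigcap_n V_n=\cO(\gamma_0)\cap C_{e,0}$ as the support of $\phi_q$ on the orbit, and the count $\vol_{\abs{dX}}(V_n)=p^{-4n}\#S_n$ by fibres of $\pi_n^\m$ are all exactly the ingredients of the paper's proof. You also make explicit a point the paper uses implicitly, namely that mod $p^n$ the $\GL_2(\integ_p/p^n)$-similarity class of $\gamma_0$ coincides with the characteristic-polynomial condition; your Hensel/idempotent argument, resting on the distinctness of the eigenvalues of $\gamma_0$ mod $p$, is correct and is a genuine service here.

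The gap is the step you defer as ``bookkeeping'': accounting for the densities of $\abs{\omega_G}$ against $\abs{dX}$ on $C_{e,0}$ and of $\abs{d\omega_A}$ against $\abs{dt}\abs{ds}$ near $(a,q)$. This is precisely where $\ell=p$ differs from $\ell\neq p$, and with the normalization you yourself write down, $\abs{\omega_G}=\abs{dX}/\abs{\det X}^2$ (the translation-invariant form), it does not come out to the stated constant: on $C_{e,0}$ one has $\abs{\det X}_p=q^{-1}$, so $\abs{\omega_G}=q^{2}\abs{dX}$ there, while near $s=q$ one has $\abs{ds/s}=q\,\abs{ds}$; the corrections are $q^{2}$ upstairs and $q$ downstairs, so your Step 1 would yield $\lim_n v_n(\gamma_0)=q^{-1}\,O^{\geom}_{\gamma_0}(\phi_q)$, i.e.\ the lemma off by a factor of $q$. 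The cancellation you need occurs exactly when $\omega_G$ carries a single power of the determinant, i.e.\ the conjugation-invariant (non-Haar) form $dX/\det X$ of \eqref{eqdefomega} in the appendix: then both corrections equal $q$, and $\lim_n v_n(\gamma_0)=O^{\geom}_{\gamma_0}(\phi_q)$ follows. This is in effect what the paper's proof does, since it measures $V_n$ by $p^{-4n}\#S_n$ and $U_n$ by $p^{-2n}$ (both additively) and then invokes \eqref{eq:st.volume} and \eqref{eq:denom}. So to complete your argument you must pin down which $\omega_G$ enters the definition of $\mu^{\geom}_{\gamma_0}$ and verify the cancellation under that choice; as written, the deferred bookkeeping is not a routine check but is the entire content of the constant $p^3/\#\SL_2(\F_p)$ in the statement, and carried out with $\abs{dX}/\abs{\det X}^2$ it would contradict the lemma rather than prove it.
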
  
\begin{proof}
The proof is similar to the case $\ell\neq p$, with one key modification. There, we are using the 
reduction $\bmod \ell^n$ map $\pi_n$ defined on $G(\Z_{\ell})$. 
Here, we need to extend the map $\pi_n$ to a set that contains
$\gamma_0$. 

Let $\pi^\m_n: \mat_2(\integ_p) \to \mat_2(\integ_p/p^n)$ be the
projection map, and let $\fc: \gl_2(\rat_p) \ra A(\rat_p)$ be the
characteristic polynomial map.  As in \S \ref{sub:gek_p_not_l} above, we  define 
the sets 
\begin{align*}
U_n&:= \{a=(a_0, a_1)\in A(\Z_p) \mid a_i \equiv a_i(\gamma_0) \mod p^n , i=0,1\}\\
S_n&:=\{\gamma\in \mat_2(\Z_p/p^n): \gamma\twiddle \pi_n^\m(\gamma_0)\}\\
V_n&:= (\pi_n^\m)^{-1}(S_n)\subset \mat_2(\Z_p)\cap \GL_2(\Q_p). 
\end{align*}
As before, informally, we think of $U_n$ as a neighbourhood of the point given by the coefficients of the characteristic polynomial of $\gamma_0$ in the Steinberg-Hitchin base, and we think of $V_n$ as the intersection of the corresponding neighbourhood of the orbit of $\gamma_0$ in $\GL_2(\Q_p)$ with $\mat_2(\Z_p)$. 
In the case $\ell \neq p$  we had $\GL_2(\Z_\ell)$ in the place of $\mat_2(\Z_p)$ in this description, and so  it was clear that the evaluation of the volume of $V_n$ would lead to the orbital integral of $\phi_0$, the characteristic function of $\GL_2(\Z_\ell)$. Here, we need to make the connection between the set $V_n$ and our function $\phi_q$.   

We claim that if $n>e$, then
$V_n \subset C_{e,0}$. Indeed, suppose $\gamma \in V_n$. Then, since the characteristic polynomial of $\gamma$ is congruent to that of $\gamma_0$, the  trace of $\gamma$ is a $p$-adic unit. 
Then $\gamma$ cannot lie in any double coset $C_{m,n}$ with both $m, n$ positive, because if it did, its trace would have been divisible by $p^{\min (m,n)}$. 
Then $\gamma$ has to lie in a double coset of the form $C_{e+m, -m}$ for some $m\ge 0$; but if $m>0$, then such a double coset has empty intersection with $\mat_2(\Z_p)$, so $m=0$ and the claim is proved. 

As in the proof of Proposition \ref{prop:gek.volume} (iii), the volume
of the set $V_n$ equals $p^{-4n}\# S_n$.  The rest of the proof
repeats the proofs of Proposition \ref{prop:gek.volume} and Corollary
\ref{cor:gekeler-steinberg}.  We again set $V(\gamma_0)=\cap_{n\ge 1}
V_n\subset C_{e,0}$.   Since $\pi^\m_n$ is surjective, $V(\gamma_0)=O(\gamma_0)\cap C_{e,0}$.  By (\ref{eq:st.volume}),
$$O^{\geom}_{\gamma_0}(\phi_q)=\lim_{n\to \infty}\frac{\vol_{\mu^\serre_{\GL_2}} V_n(\gamma_0)}{\vol_{\mu^\serre_A}(U_n)}=
\lim_{n\to\infty}\frac{\#S_n(\gamma_0)p^{-4n}}{p^{-2n}},$$ and the statement follows 
by (\ref{eq:main}), which does not require any modification. 

\end{proof}

Recall that, in terms of the data $(a,q)$, we have also computed a representative $\delta_0$ for a $\sigma$-conjugacy class in $\gl_2(\rat_q)$.  It is characterized by the fact that, possibly after adjusting $\gamma_0$ in its conjugacy class, we have  $\operatorname{N}_{\rat_q/\rat_p}(\delta_0) = \gamma_0$.

The twisted centralizer $G_{\delta_0\sigma}$ of $\delta_0$ is an inner form of the centralizer $G_{\gamma_0}$ \cite[Lemma 5.8]{kottwitz82}; since $\gamma_0$ is regular semisimple, $G_{\gamma_0}$ is a torus, and thus $G_{\delta_0\sigma}$ is isomorphic to $G_{\gamma_0}$.  Using this, any choice of Haar measure on $G_{\delta_0\sigma}(\rat_p)$ induces one on $G_{\gamma_0}(\rat_p)$.

If $\phi$ is a function on $G(\rat_q)$, denote its twisted (canonical) orbital integral along the orbit of $\delta_0$ by
\[
TO_{\delta_0}^\can(\phi) = \int_{G_{\delta_0\sigma}(\rat_p)\bs G(\rat_q)} \phi(h\inv \delta_0 h^\sigma)\, d\mu^\can.
\]

\begin{lemma}
\label{lem:basechange}
Let $\phi_{p,q}$ be the characteristic function of $\gl_2(\integ_q) \lambda_{0,1} \gl_2(\integ_q)$.  Then
\[
TO_{\delta_0}^\can(\phi_{p,q}) = O^\can_{\gamma_0}(\phi_q).
\]
\end{lemma}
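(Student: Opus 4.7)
The plan is to recognize this identity as an instance of the Arthur--Clozel base change fundamental lemma for $\gl_2$, combined with a support-based comparison between $b\phi_{p,q}$ and $\phi_q$ on the ordinary stratum. The spherical base change fundamental lemma asserts that for any $\phi \in \mathcal{H}(\gl_2(\rat_q), \gl_2(\integ_q))$ and any $\sigma$-regular semisimple $\delta \in \gl_2(\rat_q)$ whose norm $\gamma = \operatorname{N}(\delta)$ lies in $\gl_2(\rat_p)$,
\[
TO^\can_\delta(\phi) = O^\can_\gamma(b\phi),
\]
where $b \colon \mathcal{H}(\gl_2(\rat_q), \gl_2(\integ_q)) \to \mathcal{H}(\gl_2(\rat_p), \gl_2(\integ_p))$ is the base change homomorphism, which sends (via the Satake isomorphism) a symmetric Laurent polynomial $f(Y_1, Y_2)$ to $f(X_1^e, X_2^e)$ with $q = p^e$. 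The canonical measures on $G_{\delta_0\sigma}(\rat_p)$ and $G_{\gamma_0}(\rat_p)$ match under the torus isomorphism noted in the paragraph preceding the lemma.

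Applied to $\phi_{p,q}$, the fundamental lemma yields $TO^\can_{\delta_0}(\phi_{p,q}) = O^\can_{\gamma_0}(b\phi_{p,q})$, so it suffices to show $O^\can_{\gamma_0}(b\phi_{p,q}) = O^\can_{\gamma_0}(\phi_q)$. Since $\phi_{p,q}$ is the characteristic function of a minuscule $\gl_2(\integ_q)$-double coset, its Satake transform is (up to a power of $p$) the sum $Y_1 + Y_2$; hence $b\phi_{p,q}$ corresponds to the power sum $X_1^e + X_2^e$. By contrast, $\phi_q$ has Satake transform the Hall--Littlewood polynomial for the cocharacter $(e,0)$, explicitly $X_1^e + X_2^e + (1-1/p)\sum_{i=1}^{e-1} X_1^{e-i}X_2^i$ up to the same power of $p$. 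Inverting Satake, the difference $b\phi_{p,q} - \phi_q$ is a linear combination of characteristic functions of the interior Cartan cells $\gl_2(\integ_p) \diag(p^{e-i}, p^i) \gl_2(\integ_p)$ for $1 \le i \le e-1$.

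The key observation is that every element of these interior cells has both eigenvalues divisible by $p$, hence is non-ordinary. By the ordinariness of $\gamma_0$---exactly one root of $f_{a,q}(T) = T^2 - aT + q$ is a $p$-adic unit---its $\gl_2(\rat_p)$-conjugacy class is contained in the single Cartan cell $\gl_2(\integ_p) \diag(p^e, 1) \gl_2(\integ_p)$ and is disjoint from every interior cell. Therefore $b\phi_{p,q} - \phi_q$ vanishes on the orbit of $\gamma_0$, yielding $O^\can_{\gamma_0}(b\phi_{p,q}) = O^\can_{\gamma_0}(\phi_q)$ and thus the lemma. The main obstacle, beyond citing base change, is this support match: the substantive content is that the ordinariness assumption forces two a priori distinct Hecke functions to have equal orbital integrals at $\gamma_0$.
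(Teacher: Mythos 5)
Your proposal is correct and follows essentially the same route as the paper: invoke the spherical base change fundamental lemma (Langlands), note that $b\phi_{p,q}$ equals $\phi_q$ plus a combination of characteristic functions of the interior Cartan cells $C_{a,b}$ with $a,b>0$, and use ordinariness of $\gamma_0$ (its orbit misses those cells, as in the proof of the preceding lemma) to kill the extra terms. Your explicit Satake/Hall--Littlewood computation is just a more detailed way of packaging the paper's one-line description of the base change homomorphism.
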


\begin{proof}
The asserted matching of twisted orbital integrals on $\gl_2(\rat_q)$ with orbital integrals on $\gl_2(\rat_p)$ is one of the earliest known instances of the fundamental lemma (\cite{langlands:basechange}; see also \cite[Sec.\ 4]{laumon:vol1}, \cite{getz-goresky}*{(E.4.9)} or even \cite[Sec.\ 2.1]{achtercunningham:orb}).
Indeed, the base change homomorphism of the Hecke algebras matches the characteristic function of 
$\gl_2(\integ_q) \lambda_{1,0} \gl_2(\integ_q)$ with $\phi_q+ \phi$, where $\phi$ is a linear combination of the characteristic functions of $C_{a,b}$ with $a+b=e$ and $a, b>0$. 
As shown in the proof of the previous lemma, the orbit of $\gamma_0$ does not intersect the double cosets $C_{a,b}$ with $a, b>0$, and thus the only non-zero term on the right-hand side is $O^\can_{\gamma_0}(\phi_q)$. 
\end{proof}

\section{Canonical measure vs. geometric measure}\label{sec:comparison}
Finally, we need to relate the orbital integral with respect to the geometric measure as above to the canonical orbital integrals. 
A very similar calculation is discussed in \cite{langlands-frenkel-ngo} (and as the authors point out, surprisingly, it seemed impossible to find in earlier literature).
Since our normalization of local measures seems to differ by an
interesting constant from that of \cite{langlands-frenkel-ngo} at
ramified finite primes, we carry out this calculation in our special
case.

\subsection{Canonical measure and $L$-functions}
\label{sub:canonical}
Here we briefly review the facts that go back to the work of Weil, Langlands, Ono, Gross, and many others, that 
show the relationship between convergence factors that can be used for Tamagawa measures and various Artin $L$-functions. Our goal is to introduce the Artin $L$-factors that naturally appear in the computation of the canonical measures. 
To any reductive group $G$ over $\Q_\ell$, Gross attaches a motive 
$M = M_G$ \cite{gross:motive}; following his notation, we consider
$M^\vee(1)$ -- the Tate twist of the dual of $M$. For any motive $M$ we let
$L_\ell(s,M)$ be the associated local Artin $L$-function. 
We will write $L_\ell(M)$ for the value of $L_\ell(s,M)$ at $s=0$.
The value $L_\ell(M^\vee(1))$ is always a positive rational number, related
to the canonical measure reviewed in \S \ref{subsub:gross}.  In
particular, if $G$ is quasi-split over $\rat_\ell$, then
\begin{equation}
\mu_G^\can=L_\ell(M^\vee(1))\abs{\omega_G}_\ell
\end{equation}
(\cite[4.7 and 5.1]{gross:motive}).

We shall also need a similar relation between volumes and Artin
$L$-functions in the case when $G=T$ is an algebraic torus which is
not necessarily anisotropic. Here we follow
\cite{bitan}. Suppose that $T$ splits over a finite Galois extension
$L$ of $\Q_\ell$; let $\kappa_L$ be the residue field of $L$, and let
$I$ 
be the inertia subgroup of the Galois group $\gal(L/\Q_\ell)$.  Let
$X^\ast(T)$ be the group of rational characters of $T$.  Let $\cT$ be
the N\'eron model of $T$ over $\Z_\ell$, with the connected component
of the identity denoted by $\cT^\circ$.
This is the canonical model for $T$ referred to in \ref{subsub:gross}.

Let $\frob_L$ be the Frobenius element of $\Gal(\kappa_L/\F_\ell)$.
The Galois group of the maximal unramified sub-extension of $L$, which
is isomorphic to $\Gal(\kappa_L/\F_\ell)$, acts naturally on the
$I$-invariants $X^\ast(T)^I$, giving rise to a 
representation
which we will denote by $\rep_T$ (and which is denoted by $h$ in
\cite{bitan}),
$$\rep_T:\Gal(\kappa_L/\F_\ell)\to \Aut(X^\ast(T)^I)\simeq \GL_{d_I}(\Z),$$ 
where $d_I=\rk(X^\ast(T)^I)$.  
Then the associated local Artin $L$-factor is defined as:
$$L_\ell(s, \rep_T):=\det\left(1_{d_I}-\frac{\rep_T(\frob_L)}{\ell^s}\right)^{-1}.$$
\begin{proposition}(\cite{bitan}*{Proposition 2.14})
$$L_\ell(1, \rep_T)^{-1} = \#\cT^\circ(\F_\ell)\ell^{-\dim(T)} = \int_{\cT^\circ(Z_\ell)}
\abs{\omega_{T}}_\ell.$$  
\end{proposition}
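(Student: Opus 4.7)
The plan is to prove each equality in turn, linking them via the structure of the special fibre $\cT^\circ_{\F_\ell} := \cT^\circ\times_{\Z_\ell}\F_\ell$ of the connected N\'eron model.

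The second equality is immediate from the Serre-Oesterl\'e formula recalled in \S\ref{subsub:SO}: the scheme $\cT^\circ$ is smooth over $\Z_\ell$ of relative dimension $\dim T$, and $\omega_T$ extends to a gauge form on $\cT^\circ$, so
$$\int_{\cT^\circ(\Z_\ell)} \abs{\omega_T}_\ell \;=\; \#\cT^\circ(\F_\ell)\cdot \ell^{-\dim T}.$$

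For the first equality, I would invoke the classical description of $\cT^\circ_{\F_\ell}$, going back to Raynaud and formulated in the present generality in \cite{bitan}: it is a commutative connected smooth $\F_\ell$-group sitting in a short exact sequence
$$0 \to U \to \cT^\circ_{\F_\ell} \to T_s \to 0,$$
where $U$ is a unipotent group of dimension $\dim T - d_I$, and $T_s$ is the $\F_\ell$-torus whose character lattice is $X^\ast(T)^I$ equipped with its natural $\frob_L$-action $\rep_T$. Counting $\F_\ell$-points in this sequence, and using that $H^1(\F_\ell,U)=0$ for any connected unipotent group over $\F_\ell$, yields
$$\#\cT^\circ(\F_\ell) \;=\; \#U(\F_\ell)\cdot \#T_s(\F_\ell) \;=\; \ell^{\dim T - d_I}\cdot \det\bigl(\ell\cdot 1_{d_I} - \rep_T(\frob_L)\bigr),$$
where the last factor is the standard Lang-Steinberg count for $T_s(\F_\ell)$. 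Dividing by $\ell^{\dim T}$ and factoring $\ell^{-d_I}$ out of the determinant identifies the result with $\det(1_{d_I}-\rep_T(\frob_L)/\ell) = L_\ell(1,\rep_T)^{-1}$, giving the first equality.

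The main technical hurdle is the structure theorem for $\cT^\circ_{\F_\ell}$, which one must either cite (\cite[Prop.\ 2.14]{bitan}) or verify by hand. For the tori relevant to the present paper, namely the centralizers $G_{\gamma_0}\iso \res_{K/\rat}\gp_m$ of regular semisimple elements in $\GL_2$, one only needs the three local cases $K\otimes\Q_\ell = \Q_\ell\times\Q_\ell$, an unramified quadratic field, or a ramified quadratic field. In each of these the structure of $\cT^\circ_{\F_\ell}$ and the resulting identity can be checked directly by inspecting the N\'eron model of $\res_{K_\ell/\Q_\ell}\gp_m$ and its norm-one subtorus, providing a self-contained substitute for the general result.
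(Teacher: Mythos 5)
Your proposal is correct, and it does something the paper itself does not: the paper treats this proposition purely as a quotation, citing \cite{bitan}*{Proposition 2.14} with no argument, whereas you reconstruct the proof behind that citation. Your two steps are exactly the standard ones: the second equality is Weil's point-counting formula applied to the smooth $\Z_\ell$-scheme $\cT^\circ$, using that Gross's canonical form $\omega_T$ is by construction a generator of the top exterior power of the cotangent space of $\cT^\circ$ that stays nonvanishing on the special fibre (so ``extends to a gauge form'' is a matter of definition rather than something to verify); the first equality comes from Raynaud's description of the special fibre of the connected N\'eron model as an extension of a torus of rank $d_I$ by a unipotent group, together with Lang's theorem (which gives both the vanishing of $H^1(\F_\ell,U)$ and the count $\det(\ell\cdot 1_{d_I}-\rep_T(\frob_L))$ for the toric quotient). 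One small caveat worth flagging: integrally, the character lattice of the toric part of the special fibre may be identified with the invariants $X^\ast(T)^I$ only up to a Frobenius-equivariant isogeny (invariants versus coinvariants differ by a finite group killed by the order of $I$), but since the point count depends only on the characteristic polynomial of Frobenius over $\Q$, this does not affect your conclusion. Your closing remark is also apt for this paper: the only tori that occur are $\res_{K/\Q}\Gm$ for $K$ imaginary quadratic, so the three local cases (split, unramified, ramified quadratic) can be checked directly on $\res_{\cO_{K_\ell}/\Z_\ell}\Gm$, giving a self-contained substitute for the general structure theorem; what the citation buys the paper is brevity and generality, while your argument makes the measure--$L$-factor dictionary transparent.
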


We observe that by definition  \cite{gross:motive}*{\S 4.3}, since $G=T$ is an algebraic torus, the canonical parahoric $\underline{T}^\circ$ is $\cT^\circ$; the canonical volume form 
$\omega_T$ is the same as the volume form denoted by $\omega_{\fp}$ in \cite{bitan}.

We also note that the motive of the torus $T$ is the Artin motive 
$M=X^\ast(T)\otimes \Q$. 
If $T$ is anisotropic over $\Q_\ell$, 
by the formula  (6.6) (cf. also (6.11)) in \cite{gross:motive}, 
we have 
$$L_\ell(M^\vee(1)) = L_\ell(1, \rep_T).$$

As in the first paragraph of \S \ref{subsub:steinberg}, let
$G$  be a reductive group over $\rat_\ell$ with simply connected derived group
$G^\der$ and connected center $Z$, and assume that $G/G^\der \iso \gp_m$.

\begin{lemma}
\label{lem:derived}
Let $T \subset G$ be a maximal torus; let $T^\der = T \cap G^\der$.  Then
\begin{equation}
\label{eqderived}
\frac{L_\ell(M_G^\vee(1))}{L_\ell(1, \rep_T)}=\frac{L_\ell(M_{G^\der}^\vee(1))}{L_\ell(1,
  \rep_{T^\der})}.
\end{equation}
\end{lemma}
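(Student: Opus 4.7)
The plan is to show that both the numerator and the denominator of the left-hand ratio of \eqref{eqderived} pick up exactly the same extra local Euler factor $(1-\ell^{-1})^{-1}$ when one passes from the derived group to the reductive group; this extra factor comes from the central $\mathbb{G}_m$-quotient, and it cancels in the ratio.

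First I would treat the motive side. The hypothesis $G/G^\der \cong \mathbb{G}_m$, together with Gross's construction of $M_G$ in terms of the degrees of the Weyl-invariant polynomials on $X^\ast(T) \otimes \Q$ (see \cite{gross:motive}*{\S 1}), gives an additive decomposition $M_G = M_{G^\der} \oplus M_{\mathbb{G}_m}$; the extra summand is the Artin motive attached to $X^\ast(\mathbb{G}_m)\otimes \Q = \Q$, with trivial Galois action. Dualizing and Tate-twisting yields $M_G^\vee(1) = M_{G^\der}^\vee(1) \oplus \Q(1)$, and hence
\[
L_\ell(M_G^\vee(1)) \;=\; L_\ell(M_{G^\der}^\vee(1)) \cdot (1-\ell^{-1})^{-1}.
\]

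Next I would handle the torus side. The short exact sequence $1 \to T^\der \to T \to \mathbb{G}_m \to 1$ dualizes to a Galois-equivariant short exact sequence of character lattices
\[
0 \to \Z \to X^\ast(T) \to X^\ast(T^\der) \to 0,
\]
in which the leftmost term carries the trivial Galois action. Taking $I$-invariants and using that $H^1(I,\Z) = \Hom_{\mathrm{cts}}(I,\Z) = 0$ (the inertia group is profinite while $\Z$ is discrete), I obtain an exact sequence of $\Gal(\kappa_L/\F_\ell)$-modules
\[
0 \to \Z \to X^\ast(T)^I \to X^\ast(T^\der)^I \to 0.
\]
After tensoring with $\Q$ this sequence splits, by semisimplicity of $\Q$-representations of finite groups, so $\rep_T \iso \rep_{T^\der} \oplus \one$ as representations of $\Gal(\kappa_L/\F_\ell)$, and therefore
\[
L_\ell(1,\rep_T) \;=\; L_\ell(1,\rep_{T^\der}) \cdot (1-\ell^{-1})^{-1}.
\]

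Dividing the two displayed identities, the $(1-\ell^{-1})^{-1}$ factors cancel and \eqref{eqderived} follows. The only non-routine step is the motivic additivity $M_G = M_{G^\der} \oplus M_{\mathbb{G}_m}$; this should be essentially immediate from Gross's construction (the invariants of $W$ acting on $X^\ast(T)\otimes\Q$ split into those for the root lattice part and those for the $\mathbb{G}_m$-quotient, contributing disjoint sets of fundamental degrees), but it deserves an explicit reference.
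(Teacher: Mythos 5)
Your proof is correct and follows essentially the same route as the paper: both arguments split off the same rank-one abelian factor from numerator and denominator and cancel it --- you along the quotient $G/G^\der \cong \Gm$ via the exact sequence $0\to\Z\to X^*(T)\to X^*(T^\der)\to 0$ and $H^1(I,\Z)=0$, the paper along the (isogenous) connected center $Z$ via $G \sim Z\times G^\der$ and the isogeny $T^\der \to T/Z$. The reference you ask for to justify $M_G = M_{G^\der}\oplus M_{\Gm}$ is precisely the one the paper uses, namely isogeny-invariance of Gross's motive \cite[Lemma 2.1]{gross:motive} applied to the isogeny between $G$ and $\Gm \times G^\der$.
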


\begin{proof}
The motive $M_H$ of a reductive group $H$, and thus $L_\ell(M_H\dual(1))$,
depends on $H$ only up to isogeny \cite[Lemma 2.1]{gross:motive}.
Since $G$ is isogenous to $Z \times G^\der$, $$L_\ell(M_G\dual(1)) =
L_\ell(M_Z\dual(1))L_\ell(M_{G^\der}\dual(1)).$$  Because $G^\der \cap
Z$ is finite 
\cite[(3.1)]{langlands-frenkel-ngo}, so is $T^\der \cap Z$.
Therefore, the natural map $T^\der \ra T/Z$ is an isogeny onto its
image.  For dimension reasons it is an actual isogeny, and induces an
isomorphism  $X^*(T^\der)\tensor\rat\iso X^*(T/Z)\tensor\rat$ of $\gal(\rat_\ell)$-modules.
Therefore, $L(s,\rep_{T^\der}) = L(s,\rep_{T/Z})$, and thus
\[
L(s,\rep_T) = L(s,\rep_{T/Z}) L(s,\rep_Z) =
L(s,\rep_{T^\der})L(s,\rep_Z).
\]
Identity \eqref{eqderived} is now immediate.
\end{proof}

\subsection{Weyl discriminants and measures} 

Our next immediate goal is to find an explicit constant $d(\gamma)$
such that $\mu_{\gamma}^\can = d(\gamma) \mu^{\geom}_\gamma$. We note
that a similar calculation is carried out in
\cite{langlands-frenkel-ngo}. However, the notation there is slightly
different, and the key proof in \cite{langlands-frenkel-ngo} only
appears for the field of complex numbers; hence, we decided to include
this calculation here.

Let $G$ be a split reductive group over $\rat_\ell$;
choose a split maximal torus and associated
root system $R$ and set of positive roots $R^+$. 

\begin{defn}
Let  $\gamma\in G(\Q_\ell)$, let $T$ be the centralizer of $\gamma$, and $\ft$ the Lie algebra of $T$. Then the discriminant of $\gamma$ is
$$D(\gamma)=\prod_{\alpha\in R}(1-\alpha(\gamma))=\det(I-\operatorname{Ad}(\gamma^{-1}))\vert{\fg/\ft}.$$
\end{defn}

\subsubsection{Weyl integration formula, revisited}
As pointed out in \cite{langlands-frenkel-ngo} (the paragraph above
equation (3.28)), since both $\mu^\can_\gamma$ and $\mu^\geom_\gamma$
are invariant under the center, it suffices to consider the case
$G=G^\der$.  So for the moment, let us assume that the group $G$ is
semisimple and simply connected; let $\phi\in C_c^\infty(\rat_\ell)$.

On one hand, the Weyl integration formula (we write a group-theoretic
version of the formulation for the Lie algebra in \cite[\S 7.7]{kottwitz:clay})
asserts that
\begin{equation}
\label{eqweylintegration}
\int_{G(\rat_\ell)} \phi(g) \abs{d\omega_G} =\sum_T \frac 1{\abs{W_T}}\int_{T(\rat_\ell)}\abs{D(\gamma)}
\int_{T(\rat_\ell)\backslash G(\rat_\ell)} \phi(g^{-1}\gamma g ) d\abs{\omega_{T\backslash G}}
\, d\abs{\omega_T},
\end{equation}
by our definition of the measure $d\abs{\omega_{T\backslash G}}$.
(Here, the sum ranges over a set of representatives for
$G(\rat_\ell)$-conjugacy classes of maximal $\rat_\ell$-rational tori
in $G$, and $W_T$ is the finite group $W_T = N_G(T)(\rat_\ell)/T(\rat_\ell)$.)

On the other hand we have, by definition of the geometric measure, 
$$
\int_{G(\rat_\ell)} \phi(g)\, d\abs{\omega_G}=\int_{A(\rat_\ell)}\int_{\fc^{-1}(a)}\phi(g)\,d\abs{\omega_\gamma^{\geom}(g)} 
\,\abs{d\omega_A}.$$
To compare the two measures, we need to match the integration over
$A(\rat_\ell)$ with the sum of integrals over tori. 

Up to a set of measure zero, $A(\rat_\ell)$ is a disjoint union of
images of $T(\rat_\ell)$, as $T$ ranges over the same set as in
\eqref{eqweylintegration}; and for each such $T$, the restriction of $\fc$ to $T$
is $\abs{W_T}$-to-one.

It remains to compute the Jacobian of this map for a given $T$.  Over
the algebraic closure of $\rat_\ell$ this calculation is done, for example, in
\cite{kottwitz:clay}*{\S 14}; over $\rat_\ell$, this only applies to the split
torus $T^\spl$.  The answer over the algebraic closure is
$c_T\prod_{\alpha>0}(\alpha(x)-1)$, where $c_T\in \bar F^\times$ is a
constant (which depends on the torus $T$).  We compute
$\abs{c_T}_\ell$ in the special case where $T$ comes from a
restriction of scalars in $\GL_2$.

\begin{lemma}
Let $T$ be a torus in $\GL_2(\rat_\ell)$, and let $c_T$ be the constant defined above. 
Then $\abs{c_T}_\ell =1$ if $T$ is split or splits over an unramified extension, and $\abs{c_T}=\ell^{-1/2}$ if $T$ splits over a ramified quadratic extension. 
In particular, if 
$\gamma_0\in \GL_2(\rat)$ and $T=\res_{K/\rat}\Gm$ is the centralizer of $\gamma_0$ as in \S \ref{sub:defisog}, then 
$\abs{c_T}=\abs{\Delta_K}_\ell^{-1/2}$. 
\end{lemma}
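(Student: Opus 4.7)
The strategy is to pass to $\bar\rat_\ell$, where every maximal torus of $\GL_2$ becomes split and the Jacobian of $\fc|_T$ takes a uniform shape, and then to track the factor introduced by descent to $\rat_\ell$, which is controlled by the discriminant of the splitting field $K$.

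Over $\bar\rat_\ell$, use eigenvalue coordinates $(z_1, z_2)$ on the diagonal torus, so that $\fc(z_1, z_2) = (z_1 + z_2, z_1 z_2)$. A direct wedge-product calculation gives
\[
\fc^\ast(dt \wedge ds/s) \;=\; (z_1 - z_2) \cdot \frac{dz_1}{z_1} \wedge \frac{dz_2}{z_2},
\]
while the (unique) positive root satisfies $\alpha(x) - 1 = (z_1 - z_2)/z_2$. Hence the Jacobian with respect to the ``split'' logarithmic form factors as $z_2 \cdot \prod_{\alpha > 0}(\alpha(x) - 1)$, realizing the asserted shape $c_T \prod_{\alpha>0}(\alpha(x)-1)$ up to the unit $z_2$. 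When $T$ is split over $\rat_\ell$, the eigenvalues $(z_1, z_2)$ are $\rat_\ell$-rational and integrality of $\gamma_0$ forces $|z_2|_\ell = 1$, yielding $|c_T|_\ell = 1$.

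For a non-split $T = \res_{K/\rat_\ell}\Gm$, choose $\alpha$ so that $\cO_K = \Z_\ell[\alpha]$, with $\alpha^2 - a\alpha + b = 0$ (a unit generator in the unramified case; a uniformizer in the ramified case). Use the $\rat_\ell$-rational coordinates $(x, y)$ related to the split coordinates by $z_i = x + y\alpha_i$, where $\alpha_1, \alpha_2$ are the two Galois conjugates of $\alpha$. Then $dz_1 \wedge dz_2 = (\alpha_1 - \alpha_2)\, dx \wedge dy$, so the $\rat_\ell$-rational descent of $dz_1/z_1 \wedge dz_2/z_2$ is the form $dx \wedge dy/N(x+y\alpha)$, which by the $L$-factor identity $L_\ell(1,\rep_T)^{-1} = \mu^\can(\cT^\circ(\Z_\ell))$ recalled in \S\ref{sub:canonical} coincides with Gross's canonical form on the N\'eron component $\cT^\circ$. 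Comparing the two forms, the Jacobian in canonical coordinates picks up an extra factor of $\alpha_1 - \alpha_2$, so $|c_T|_\ell = |\alpha_1 - \alpha_2|_\ell = |\mathrm{disc}(\alpha)|_\ell^{1/2} = |\Delta_K|_\ell^{1/2}$.

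Evaluating: in the unramified case $\ell \nmid \Delta_K$, giving $|c_T|_\ell = 1$; in the ramified case $v_\ell(\Delta_K) = 1$ (for odd $\ell$), giving $|c_T|_\ell = \ell^{-1/2}$, which match the two assertions of the lemma. The main technical obstacle is verifying that the explicit form $dx \wedge dy/N(x+y\alpha)$ indeed agrees with Gross's canonical form on the connected N\'eron model $\cT^\circ$. For unramified $K$ this is immediate since $\cT = \res_{\cO_K/\Z_\ell}\Gm$ is already connected; in the ramified case $\cT$ is disconnected and one must examine $\cT^\circ$ directly, confirming that Gross's normalization $L_\ell(1,\rep_T)^{-1} = \mu^\can(\cT^\circ(\Z_\ell))$ is preserved by the explicit form.
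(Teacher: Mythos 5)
Your core computation is correct, and it is essentially the paper's own argument in different coordinates. The paper likewise reduces the lemma to the ratio of the canonical form on $T=\res_{K/\rat_\ell}\Gm$ to the pullback $\psi^\ast(\omega_{T^\spl})$ of the split logarithmic form; its explicit calculation $\psi^\ast(du/u\wedge dv/v)=2\sqrt{\epsilon}\,\omega_T$ is exactly your factor $\alpha_1-\alpha_2$ in the special case $\alpha=\sqrt{\epsilon}$ (odd $\ell$), while your monogenic-generator computation, together with the volume identity confirming that $dx\wedge dy/N(x+y\alpha)$ is a $\Z_\ell$-generator of invariant forms on $\cT^\circ=\res_{\cO_K/\Z_\ell}\Gm$, plays the same role as the paper's appeal to Weil's convergence-factor computation and has the advantage of treating $\ell=2$ uniformly. (That verification is genuinely easy: any two invariant top forms on $T$ differ by a scalar in $\rat_\ell^\times$, and comparing $\int_{\cO_K^\times}\abs{dx\wedge dy/N}_\ell=\#(\cO_K/\ell)^\times\ell^{-2}=\#\cT^\circ(\F_\ell)\ell^{-2}$ with Gross--Bitan pins the scalar down to a unit, which is all you need.) Your value $\abs{c_T}_\ell=\abs{\Delta_K}_\ell^{1/2}$ agrees with the numerical assertions of the lemma ($1$ and $\ell^{-1/2}$) and with the way the factor enters Proposition \ref{prop:comp}; the exponent in the lemma's final clause is inconsistent with its own first sentence, so the apparent mismatch there is not your error.

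One step as written does not hold up, though it is easily repaired: in the split case you match the $\GL_2$ Jacobian $(z_1-z_2)$ against the shape $c_T\prod_{\alpha>0}(\alpha(x)-1)$ and dispose of the leftover factor $z_2$ by claiming ``integrality of $\gamma_0$ forces $\abs{z_2}_\ell=1$.'' First, $z_2$ is not constant on $T$, so it cannot be absorbed into the constant $c_T$, which must not depend on the point; second, the claim is false precisely at $\ell=p$, where $\det\gamma_0=q$ is not a unit and one eigenvalue has absolute value $\abs{q}_p<1$ --- and $\ell=p$ is exactly where the split case gets used, since the ordinary hypothesis makes $T$ split at $p$. The clean fix is the paper's: both $\mu^\can_\gamma$ and $\mu^\geom_\gamma$ are invariant under the center, so one may compute with $G^\der=\SL_2$ (equivalently, the factor $z_2$ is common to every maximal torus after conjugation over $\bar{\rat}_\ell$ and belongs to the $\gamma$-dependent part of the Jacobian, reflected in the convention $D(\gamma)=D_{G^\der}(\gamma)\det\gamma$, so it cancels when comparing tori). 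With that adjustment the torus-dependent constant is exactly $\omega_T/\psi^\ast(\omega_{T^\spl})$, which is the quantity you compute correctly in the non-split case.
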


\begin{proof} We prove the lemma by direct calculation for $\GL_2$.
First, let us compute $\abs{c_{T}}$ for the split torus.
Here we can just compute the Jacobian of the map $T^\der\to T^\der/W$ by hand. Since we are working with invariant differential forms, we can just do the Jacobian calculation on the Lie algebra; it suffices to
compute the Jacobian of the map from $\ft$ to $\ft/W$.
Choose coordinates on the split torus in $\SL_2=\GL_2^\der$, so that elements of $\ft$ are diagonal matrices with entries $(t, -t)$; then the canonical measure on $\ft$ is nothing but $dt$. Now, the coordinate on $\ft/W$ is $y=-t^2$; the form $\omega_{\A^1}$ is $dx$.  
The Jacobian of the change of variables from $\ft/W$ to $\A^1$ is $-2t$.
Thus, for the split torus $c=-1$: note that $2t$ is the product of positive roots (on the Lie algebra). Thus, 
$\abs{c_T}=1$. 

Now, consider a general maximal torus $T$ in $\GL_2$. 
Let $T^\spl$ be a split maximal torus; we have shown that $\abs{c_{T^\spl}}=1$. 
The torus $T$ is conjugate to $T^\spl$ over a quadratic  field extension $L$. 
Let us briefly denote this  conjugation map by $\psi$.
Then the map $\fc\vert T$ can thought of as the conjugation $\psi:T \to T^\spl$ (defined over $L$) followed by 
 the map $\fc\vert{T^\spl}$. 
Then 
$$c_T= c_{T^\spl}\frac{\omega_T}{\psi^\ast(\omega_{T^\spl})},$$
where $\psi^\ast(\omega_{T^\spl})$ is the pullback of the canonical volume form on $T^\spl$ under $\psi$ and the ratio $\frac{\omega_T}{\psi^\ast(\omega_{T^\spl})}$ is a constant in $L$.  We thus have 
\begin{equation}\label{eq:psi}
c_T=\abs{\frac{\omega_T}{\psi^\ast(\omega_{T^\spl})}}_L,
\end{equation}
where $\abs{\cdot}_L$ is the unique extension of the absolute value on $\Q_\ell$ to $L$. 

At this point this is just a question about two tori, no longer requiring Steinberg section, and so we pass back to working with the group $\GL_2$ rather than $\SL_2$. 
Now $T$ is obtained by restriction of scalars from $\Gm$, and so
we can compute $\psi^\ast(\omega_{T^\spl})$ by hand.
By definition, $T=\res_{L/\Q_\ell}\G_m$; $T^\spl =\Gm\times \Gm$. The form $\omega_{T^\spl}$ is 
$$\omega_{T^\spl}=\frac{du}{u}\wedge\frac{dv}{v},$$ 
where we denote the coordinates on $\Gm\times\Gm$ by $(u,v)$. 
Let $L=\Q_\ell(\sqrt{\epsilon})$, where $\epsilon$ is a non-square in $\Q_\ell$ 
(assume for the moment that $\ell\neq 2$). 
Then every element of $T$ is conjugate in $\GL_2(\Q_\ell)$ to 
$\left[\begin{smallmatrix} x & \epsilon y\\ y & x\end{smallmatrix}\right]$, and using $(x, y)$ as the coordinates on 
$T$, the map  $\psi$ can b written as $\psi(x, y)=(x+\sqrt{\epsilon}y, x-\sqrt{\epsilon}y)$. 
Then one can simply compute 
$$\psi^\ast(\frac{du}u\wedge\frac{dv}{v})= 2\sqrt{\epsilon}\frac{dx \wedge dy}{x^2-\epsilon y^2} =2\sqrt{\epsilon}\omega_T.$$
Thus we get (for $\ell\neq 2$), 
\begin{equation*}
\abs{c_T}_\ell =\abs{2\sqrt{\epsilon}}_L =\begin{cases} 1 &\quad  L \text{ is unramified}\\ 
                                                  \sqrt{\ell}& \quad L \text{ is ramified},
                                         \end{cases}
\end{equation*}
which completes the proof of the lemma in the case $\ell\neq 2$. 

There is, however, a better argument, which also covers the case $\ell=2$. 
Namely, to find the ratio $\abs{\frac{\omega_T}{\psi^\ast(\omega_{T^\spl})}}_L$ of (\ref{eq:psi}),
we just need to find the ratio of the volume of  
$\cT^\circ(\Z_\ell)$ with respect to the measure $\abs{d\omega_T}$ to its volume with respect to 
$\abs{d\psi^\ast(\omega_{T^\spl})}$.
This is, in fact, the same calculation as the one carried out 
in \cite{weil:adeles}*{p.22 (before Theorem 2.3.2)}, and the answer is that the convergence factors for the pull-back of the form $\omega_{T^\spl}$ to the restriction of scalars is 
$(\sqrt{\abs{\Delta_K}_\ell})^{\dim(\Gm)}$, in this case.
\end{proof}

Finally, summarizing the above discussion, we obtain 
\begin{proposition}\label{prop:comp}
Let $\gamma\in \gl_{2}(\Q)$ be a regular element. Let $T$ be the centralizer of $\gamma$, and let $K$ be as in 
 \S\ref{sub:defisog}.
 Abusing notation, we also denote by $\gamma$ the image of $\gamma$ in $\GL_2(\Q_\ell)$ for every finite prime $\ell$.
Then for every finite prime $\ell$, 
\begin{equation*}
\mu_\gamma^{\geom}=  \frac{L_\ell(1, \rep_T)}{L_\ell(M_G^\vee(1))} \abs{\Delta_K}_\ell^{-1/2} \abs{D(\gamma)}_\ell^{1/2}\mu_\gamma^\can
\end{equation*}
as measures on the orbit of $\gamma$. 
\end{proposition}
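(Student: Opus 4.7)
The plan is to compute $\int_{G(\rat_\ell)} \phi\, d|\omega_G|_\ell$ in two different ways and equate the resulting integrands torus by torus. On one hand, the Weyl integration formula (adapted to the forms $|\omega_G|_\ell$ and $|\omega_T|_\ell$) gives
\begin{equation*}
\int_{G(\rat_\ell)} \phi\, d|\omega_G|_\ell = \sum_T \frac{1}{|W_T|} \int_{T(\rat_\ell)} |D(\gamma)|_\ell \left(\int_{T(\rat_\ell)\backslash G(\rat_\ell)} \phi(g^{-1}\gamma g)\, d|\omega_{T\backslash G}|\right) d|\omega_T|_\ell,
\end{equation*}
where $T$ ranges over representatives of conjugacy classes of maximal $\rat_\ell$-rational tori and $d|\omega_{T\backslash G}|$ denotes the quotient measure. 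On the other hand, the wedge decomposition $\omega_G = \omega^{\geom}_{\fc(\gamma)} \wedge \omega_A$ from the definition of the geometric measure gives
\begin{equation*}
\int_{G(\rat_\ell)} \phi\, d|\omega_G|_\ell = \int_{A(\rat_\ell)} \int_{\fc^{-1}(a)} \phi\, d\mu^{\geom}_a\, d|\omega_A|_\ell(a).
\end{equation*}

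Next, I would change variables. Up to a measure-zero set, $A(\rat_\ell)$ is a disjoint union of images $\fc(T(\rat_\ell))$, each covered $|W_T|$-to-one by $\fc|_T$. Pulling the integral over $A$ back to a sum over tori via $\fc|_T$, whose Jacobian is $J_T(\gamma) = c_T\prod_{\alpha>0}(\alpha(\gamma)-1)$, yields
\begin{equation*}
\int_{G(\rat_\ell)} \phi\, d|\omega_G|_\ell = \sum_T \frac{1}{|W_T|} \int_{T(\rat_\ell)} |J_T(\gamma)|_\ell \left(\int_{\fc^{-1}(\fc(\gamma))} \phi\, d\mu^{\geom}_\gamma\right) d|\omega_T|_\ell.
\end{equation*}
Matching integrands in the two expressions, and using that for $\GL_2$ the fiber $\fc^{-1}(\fc(\gamma))$ is the single rational orbit of $\gamma$, gives the identity
\begin{equation*}
d\mu^{\geom}_\gamma = \frac{|D(\gamma)|_\ell}{|J_T(\gamma)|_\ell}\, d|\omega_{T\backslash G}|.
\end{equation*}

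To pass to canonical measures, I would invoke the Artin $L$-value relations recalled in \S\ref{sub:canonical}: $\mu_G^\can = L_\ell(M_G^\vee(1))|\omega_G|_\ell$ and $\mu_T^\can = L_\ell(1,\rep_T)|\omega_T|_\ell$. These propagate to the quotient as $d|\omega_{T\backslash G}| = (L_\ell(1,\rep_T)/L_\ell(M_G^\vee(1)))\, d\mu^\can_\gamma$. Substituting yields
\begin{equation*}
d\mu^{\geom}_\gamma = \frac{L_\ell(1,\rep_T)}{L_\ell(M_G^\vee(1))}\cdot\frac{|D(\gamma)|_\ell}{|J_T(\gamma)|_\ell}\, d\mu^\can_\gamma.
\end{equation*}

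The final identification $|D(\gamma)|_\ell/|J_T(\gamma)|_\ell = |\Delta_K|_\ell^{-1/2}|D(\gamma)|_\ell^{1/2}$ combines the explicit value $|c_T|_\ell = |\Delta_K|_\ell^{-1/2}$ from the preceding lemma with the identity $|\prod_{\alpha>0}(\alpha(\gamma)-1)|_\ell = |D(\gamma)|_\ell^{1/2}$ that follows from the $\GL_2$ root-system formula $D(\gamma) = (1-\alpha(\gamma))(1-\alpha^{-1}(\gamma))$ once one observes that $|\alpha(\gamma)|_\ell = 1$ for the $\gamma_0$ produced from an ordinary isogeny class (its eigenvalues being Galois conjugates). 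The main technical obstacle is the Jacobian calculation on an anisotropic torus: this is where the restriction-of-scalars convergence factor appears, through the comparison between $\omega_T$ and the pullback of $\omega_{T^\spl}$ under the splitting map carried out in the preceding lemma.
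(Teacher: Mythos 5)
Your route is the same as the paper's own: Weyl integration against $\abs{\omega_G}$, the fibration of $A(\Q_\ell)$ by the $\abs{W_T}$-to-one maps $\fc|_T$, the Jacobian constant $c_T$ evaluated through the restriction-of-scalars comparison, and the $L$-factor normalizations of \S\ref{sub:canonical} to convert $\abs{\omega_{T\backslash G}}$ into $\mu_\gamma^\can$; so this is a reconstruction of the paper's argument, and steps 1--5 are structurally sound.

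The genuine gap is at $\ell=p$, which the proposition explicitly covers. Ordinarity means precisely that $p$ splits in $K$: the eigenvalues of $\gamma_0$ are $u_1q$ and $u_2$ with $u_i\in\Z_p^\times$, they lie in $\Q_p$ (so they are not Galois conjugate over $\Q_p$), and $\abs{\alpha(\gamma_0)}_p=q^{\pm 1}\neq 1$. Concretely, $\abs{D(\gamma_0)}_p^{1/2}=q^{1/2}$ while $\abs{\alpha(\gamma_0)-1}_p$ is $1$ or $q$, so your key identity $\abs{\prod_{\alpha>0}(\alpha(\gamma)-1)}_\ell=\abs{D(\gamma)}_\ell^{1/2}$ fails at $p$, and with it your last display. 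This is exactly the place where one must distinguish the Weyl discriminant $D(\gamma)=\prod_{\alpha\in R}(1-\alpha(\gamma))$ from the polynomial discriminant $\tr(\gamma)^2-4\det(\gamma)=D(\gamma)\det(\gamma)$ (they differ by $\abs{\det\gamma_0}_p^{1/2}$), and it is the latter normalization that is forced by consistency with Lemma~\ref{lem:l=p,g=1}, with the appendix (Lemma~\ref{applemma0}, where this abuse of notation is made explicit), and with the global product in Theorem~\ref{th:main}. Two omissions feed into this: you skip the reduction to $G^{\mathrm{der}}$ (invariance of both measures under the center), which is where this determinant bookkeeping lives; and the Jacobian formula with a genuine \emph{constant} $c_T$ is a statement for the semisimple group --- applied directly to $\GL_2\to\A^1\times\Gm$ with $dt\wedge ds/s$, the ``constant'' multiplying $\alpha(\gamma)-1$ acquires an eigenvalue factor (for the split torus the Jacobian with respect to $\frac{d\lambda_1}{\lambda_1}\wedge\frac{d\lambda_2}{\lambda_2}$ is $\lambda_1-\lambda_2$, not $c\,(\lambda_1/\lambda_2-1)$), which is invisible when the eigenvalues are units but not at $p$. (At $\ell\neq p$ your conclusion does hold, but for the reason that the eigenvalues are integral with unit product $q$, not because they are Galois conjugates --- they are not when $\ell$ splits in $K$.)

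Separately, at ramified primes your constants do not cohere: taking $\abs{c_T}_\ell=\abs{\Delta_K}_\ell^{-1/2}$ together with $J_T=c_T\prod_{\alpha>0}(\alpha(\gamma)-1)$ and $\abs{\prod_{\alpha>0}(\alpha(\gamma)-1)}_\ell=\abs{D(\gamma)}_\ell^{1/2}$ yields $\abs{D}_\ell/\abs{J_T}_\ell=\abs{\Delta_K}_\ell^{+1/2}\abs{D}_\ell^{1/2}$, the reciprocal power of $\abs{\Delta_K}_\ell$ from what you assert (and from the proposition). The correct bookkeeping --- writing the Jacobian on the nonsplit torus as a constant of absolute value $\abs{\Delta_K}_\ell^{1/2}$ times $\lambda_1-\lambda_2$, as the direct computation with $\psi^\ast\omega_{T^{\spl}}$ gives --- does produce the stated factor $\abs{\Delta_K}_\ell^{-1/2}\abs{D(\gamma)}_\ell^{1/2}$; so your final formula is the right one, but as assembled the ramified case is off by $\abs{\Delta_K}_\ell$, and you need to fix the normalization of $c_T$ (what it multiplies, and in which direction it compares $\omega_T$ with the pullback of $\omega_{T^{\spl}}$) before the pieces fit.
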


\section{The global calculation}
\label{sec:global}
In this section, we put all the above local comparisons together, and thus show that Gekeler's formula reduces to a special  case of the formula of Langlands and Kottwitz. In the process we will need a formula for the global volume term that arises in that formula. 
We are now in a position to give a new proof of Gekeler's theorem, and of its generalization to arbitrary finite fields.

\begin{theorem}\label{th:main}
Let $q$ be a prime power, and let $a$ be an integer with $\abs a \le 2 \sqrt q$ and $\gcd(a,p) =1$.  The number of elliptic curves over $\ff_q$ with trace of Frobenius $a$ is
\begin{equation}
\label{eqmain}
\wnum I(a, q)=\frac{\sqrt{q}}2\nu_{\infty}(a, q)\prod_\ell \nu_\ell(a, q).
\end{equation}
\end{theorem}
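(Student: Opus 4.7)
The plan is to chain together the Langlands--Kottwitz formula (Proposition \ref{proplk}) with all the local translations established in Sections \ref{seclocalcalc} and \ref{sec:comparison}, so that everything collapses to a single global identity controlled by the class number formula for $K$. First I would apply Lemma \ref{lem:basechange} to rewrite the twisted orbital integral at $p$ as $O^{\can}_{\gamma_0}(\phi_q)$, giving
\[
\wnum I(a,q) = V \cdot \prod_\ell O^{\can}_{\gamma_0}(\phi_\ell^*),
\]
where $V = \vol(T(\Q)\bs T(\A_f))$ for $T = G_{\gamma_0} = \res_{K/\rat}\gp_m$, $\phi_\ell^* = \one_{G(\Z_\ell)}$ for $\ell\neq p$, and $\phi_p^* = \phi_q$. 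Next, Proposition \ref{prop:comp} converts each $O^{\can}$ into $O^{\geom}$ at the cost of the local factor $\frac{L_\ell(M_G^\vee(1))}{L_\ell(1,\rep_T)}|\Delta_K|_\ell^{1/2}|D(\gamma_0)|_\ell^{-1/2}$, and Corollary \ref{cor:gekeler-steinberg} together with Lemma \ref{lem:l=p,g=1} replaces $O^{\geom}_{\gamma_0}(\phi_\ell^*)$ by $\tfrac{\#\SL_2(\ff_\ell)}{\ell^3}\nu_\ell(a,q)$.

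The next step is to simplify the accumulated local constants. A direct case-by-case check, depending on whether $\ell$ is split, inert, or ramified in $K$, yields the uniform identity
\[
\frac{L_\ell(M_G^\vee(1))}{L_\ell(1,\rep_T)} \cdot \frac{\#\SL_2(\ff_\ell)}{\ell^3} = \frac{1}{L_\ell(1,\chi_K)},
\]
where $\chi_K$ is the quadratic character of $K/\Q$; taking the global product collapses this to $L(1,\chi_K)^{-1}$. For the remaining discriminant factors, the product formula $\prod_v |x|_v = 1$ applied to $\Delta_K$ and to $D(\gamma_0) = (4q-a^2)/q \in \Q^\times$ gives
\[
\prod_\ell |\Delta_K|_\ell^{1/2} = |\Delta_K|^{-1/2} \quad\text{and}\quad \prod_\ell |D(\gamma_0)|_\ell^{-1/2} = \sqrt{\tfrac{4q-a^2}{q}},
\]
and the second of these is exactly what is needed to match the archimedean term, since $\tfrac{\sqrt q}{2}\nu_\infty(a,q) = \tfrac{\sqrt{4q-a^2}}{2\pi}$.

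The main obstacle is the last step: evaluating the global volume $V$ and matching it against the combination $L(1,\chi_K)^{-1} |\Delta_K|^{-1/2}$ produced above. For this, I would decompose $T(\A_f)$ using the exact sequence
\[
1 \to \cO_K^\times \bs \widehat{\cO}_K^\times \to T(\Q)\bs T(\A_f) \to \operatorname{Cl}(\cO_K) \to 1,
\]
invoking Bitan's formula at every prime (including the ramified ones) to evaluate $\vol_{\mu^{\can}}(\widehat{\cO}_K^\times)$. Combining the resulting value of $V$ with the classical class number formula $L(1,\chi_K) = \tfrac{2\pi h_K}{w_K |\Delta_K|^{1/2}}$ produces the factor $\tfrac{\sqrt q}{2}$ needed in \eqref{eqmain}, with the extra $\sqrt q$ accounted for by the local contribution $|D(\gamma_0)|_p^{-1/2} = q^{-1/2}$ at the split prime $p$, completing the proof.
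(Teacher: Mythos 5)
Your overall route is the paper's own: Langlands--Kottwitz (Proposition \ref{proplk}) plus the base-change Lemma \ref{lem:basechange}, conversion of canonical to geometric orbital integrals via Proposition \ref{prop:comp}, the Gekeler-to-geometric identifications (Corollary \ref{cor:gekeler-steinberg}, Lemma \ref{lem:l=p,g=1}), the product formula, and the class number formula through the volume $\vol(T(\Q)\backslash T(\A_f))=h_K/w_K$; your direct split/inert/ramified verification of $\frac{L_\ell(M_G^\vee(1))}{L_\ell(1,\rep_T)}\cdot\frac{\#\SL_2(\F_\ell)}{\ell^3}=L_\ell(1,\chi_K)^{-1}$ is a legitimate shortcut equivalent to the paper's use of Lemma \ref{lem:derived}. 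The problem is the final bookkeeping at the prime $p$, where $\det\gamma_0=q$ is not a unit and the Weyl discriminant $D(\gamma_0)=(4q-a^2)/q$ and the polynomial discriminant $a^2-4q$ have different $p$-adic sizes. Run your own chain to the end: it gives
\begin{equation*}
\wnum I(a,q)\;=\;V\cdot L(1,\chi_K)^{-1}\cdot\abs{\Delta_K}^{-1/2}\cdot\sqrt{\tfrac{4q-a^2}{q}}\cdot\prod_\ell\nu_\ell(a,q)
\;=\;\frac{\sqrt{4q-a^2}}{2\pi\sqrt q}\prod_\ell\nu_\ell(a,q),
\end{equation*}
which is $\tfrac12\nu_\infty\prod_\ell\nu_\ell$, short of the asserted right-hand side of \eqref{eqmain} by exactly $\sqrt q$. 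Your claim that $\sqrt{(4q-a^2)/q}$ ``is exactly what is needed to match the archimedean term'' is therefore false, and the closing sentence cannot repair it: the factor $\abs{D(\gamma_0)}_p^{-1/2}=q^{-1/2}$ you invoke is already contained in the product-formula evaluation $\prod_\ell\abs{D(\gamma_0)}_\ell^{-1/2}=\sqrt{(4q-a^2)/q}$ (it is precisely the source of the unwanted $1/\sqrt q$ there), so citing it again is double counting, and in any case it has the wrong sign of exponent to supply a missing $\sqrt q$.

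The missing idea is that the geometric-versus-canonical comparison must be fed into the global product in terms of the discriminant of the characteristic polynomial, $\abs{D(\gamma_0)\det\gamma_0}_\ell^{1/2}=\abs{a^2-4q}_\ell^{1/2}$, rather than the Weyl discriminant $\abs{D(\gamma_0)}_\ell^{1/2}$: the two agree at every $\ell\neq p$ but differ by $\sqrt q$ at $p$ (where $\abs{a^2-4q}_p=1$ while $\abs{D(\gamma_0)}_p=q$, since $p\nmid a$). This is how the paper closes the argument: its final step pairs $\sqrt q\,\nu_\infty(a,q)=\tfrac1\pi\sqrt{4q-a^2}$ with $\sqrt{\abs{\operatorname{disc}(f_0)}}$, and the appendix states the measure comparison with $\abs{\tr(\gamma)^2-4\det(\gamma)}^{1/2}$ precisely for this reason (it reduces to $G^{\der}$, where $D_{G^{\der}}(\gamma)\det\gamma=\operatorname{disc}(f_\gamma)$). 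With the polynomial discriminant used uniformly, $\prod_{\ell<\infty}\abs{a^2-4q}_\ell^{-1/2}=\sqrt{4q-a^2}$, the archimedean factor matches on the nose, and \eqref{eqmain} follows with no leftover power of $q$. As a concrete sanity check at $p$ (take $q=p$): one computes directly $\nu_p=(1-1/p)^{-1}$ and $O^{\can}_{\gamma_0}(\phi_q)=1$, so the true local conversion constant is $\zeta_p(1)/\zeta_p(2)$, i.e.\ carries $\abs{a^2-4q}_p^{1/2}=1$ and not $\abs{D(\gamma_0)}_p^{1/2}=\sqrt q$; your version of the local factor at $p$ is off by $\sqrt q$, and that is exactly the discrepancy your last sentence tries to wave away.
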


Here, $\nu_\ell(a,q)$ (for $\ell\not =p$), $\nu_p(a,q)$, and $\nu_\infty(a,q)$ are defined, respectively, in \eqref{eqnewgekeler}, \eqref{eqnewgekeler-p}, and \eqref{eqgekeler-infinity}, and the weighted count $\wnum I(a,q)$ is defined in \eqref{eqdefwnum}.

\begin{proof}
Recall the notation surrounding $\gamma_0$ and $\delta_0$ established in \S \ref{sub:defisog}.
Given Proposition \ref{proplk}, it suffices to show that the right-hand side of \eqref{eqmain} calculates the right-hand side of \eqref{eqlk}.

Let $G=\GL_2$.
First, let
\[
\phi^p = \tensor_{\ell\not = p} \one_{G(\integ_\ell)}
\]
be the characteristic function of $G(\hat\integ^p_f)$ in $G(\aff^p_f)$.  The first integral appearing in \eqref{eqlk} is equal to 
$$O_{\gamma_0}(\phi^p)=\int_{G(\A^{p})}\phi^p \abs{d\omega_G}= \prod_{\ell\neq p} O^\can(\one_{G(\integ_\ell)}).$$
 
Combining Corollary \ref{cor:gekeler-steinberg}, relation \eqref{eqderived} 
and  Proposition \ref{prop:comp}, we get, for $\ell\neq p$,  
$$
\begin{aligned}
\nu_{\ell}(a, q) & =\frac{\ell^{3}}{\#G^\der(\F_\ell)} O^\geom_{\gamma_0}(\one_{G(\Z_\ell)})
= \frac{\ell^{3}}{\#G^\der(\F_\ell)}\frac{L_\ell(1, \rep_{T^\der})}{L_\ell(M_{G^\der}^\vee(1))} 
\abs{\Delta_K}_\ell^{-1/2} \abs{D(\gamma_0)}_\ell^{1/2}
O^{\can}_{\gamma_0}(\one_{G(\Z_\ell)})\\
&= L_\ell(1, \rep_{T^\der})\abs{D(\gamma_0)}_\ell^{1/2} \abs{\Delta_K}_\ell^{-1/2} O_{\gamma_0}^\can(\one_{G(\Z_\ell)}).
\end{aligned}
$$

Second, let $\phi_q$ be the characteristic function of $G(\integ_p) \begin{pmatrix}1&0\\0&q\end{pmatrix} G(\integ_p)$ in $G(\rat_p)$, and let $\phi_{p,q}$ be the characteristic function of $G(\integ_q) \begin{pmatrix}1&0\\0&p\end{pmatrix} G(\integ_q)$ in $G(\rat_q)$.  Using Lemmas \ref{lem:l=p,g=1} and \ref{lem:basechange}, we find that 
\begin{align*}
\nu_p(a,q) &= \frac{p^3}{\#G^\der(\ff_p)}O^\geom_{\gamma_0}(\phi_q) \\
&= \frac{p^3}{\#G^\der(\ff_p)} \frac{L_p(1,\rep_{T^\der})}{L_p(M_{G^\der}\dual(1))} \abs{\Delta_K}_p^{-1/2} \abs{D(\gamma_0)}_p^{1/2} O^\can_{\gamma_0}(\phi_q) \\
&= \frac{p^3}{\#G^\der(\ff_p)} \frac{L_p(1,\rep_{T^\der})}{L_p(M_{G^\der}\dual(1))} \abs{\Delta_K}_p^{-1/2} \abs{D(\gamma_0)}_p^{1/2} TO_{\delta_0}^\can(\phi_{p,q}).
\end{align*}

Taking a product over all finite primes, we obtain:
\begin{equation}
\label{eqprodfinite}
\prod_{\ell < \infty} \nu_\ell(a,q) = L(1,\rep_{T^\der}) \sqrt{\frac{\abs{\Delta_K}}{\abs{D(\gamma_0)}}}TO^\can_{\delta_0\sigma}(\phi_{p,q}) O^\can_{\gamma_0}(\phi^p).
\end{equation}
Recall that $f_0(T)$, the characteristic polynomial of $\gamma_0$, is $f_0(T) = T^2-aT+q$.  The (polynomial) discriminant of $f_0(T)$ and the  (Weyl) discriminant of $\gamma_0$ are related by
$\abs{D(\gamma_0)\det(\gamma_0)} = \abs{\operatorname{disc}(f_0)} = 4q-a^2$.  Consequently,
\[
\sqrt q \nu_\infty(a,q) = \oneover\pi\sqrt{\abs{D(\gamma_0)}}.
\]
Since $L(1,\rep_{T^\der}) = L(1,\rep_{T/Z})$ (Lemma
\ref{lem:derived}),  \eqref{eqmain} follows from \eqref{eqprodfinite}
and Proposition \ref{lem:arch} below, which is proved in the Appendix. 
\end{proof}

\begin{proposition}
\label{lem:arch}
We have
\begin{equation}\label{eq:desired}
\frac{\sqrt{\abs{\Delta_K}}}{2\pi} L(1, \rep_{T/Z}) = 
\vol( T(\Q)\backslash T(\A_f)).
\end{equation}
\end{proposition}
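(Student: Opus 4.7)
The plan is to show that both sides of \eqref{eq:desired} equal $h_K/w_K$, where $h_K$ is the class number of $K$ and $w_K = \#\cO_K^\times$. Note that since we are in the ordinary case with $\abs a < 2\sqrt q$, the field $K$ is imaginary quadratic, so $w_K$ is finite and the volume in question is indeed finite.

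First, I would identify $L(s,\rep_{T/Z})$ with the Dirichlet $L$-function $L(s,\chi_K)$ of the quadratic character $\chi_K$ attached to $K/\Q$. Indeed, $T = G_{\gamma_0} \iso \res_{K/\Q}\Gm$ and $Z\iso \Gm$ is the center of $\GL_2$ embedded diagonally in $T$, so $T/Z$ is the norm-one torus $\res^1_{K/\Q}\Gm$; its character lattice is free of rank one with $\Gal(K/\Q)$ acting by $-1$, i.e., it is precisely the Artin motive of $\chi_K$. A direct case analysis at split, inert, and ramified primes then gives $L_\ell(s,\rep_{T/Z}) = L_\ell(s,\chi_K)$ at every finite prime.

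Next, I would compute $\vol(T(\Q)\backslash T(\aff_f)) = \vol(K^\times\backslash \mathbb{I}_{K,f})$ (where $\mathbb{I}_{K,f}$ denotes the finite ideles of $K$) in the canonical measure. By definition the canonical measure gives $\underline{T}^\circ(\Z_\ell)$ volume $1$; for $T = \res_{K/\Q}\Gm$, the connected N\'eron model is $\res_{\cO_K/\Z}\Gm$, so at every finite prime $\underline{T}^\circ(\Z_\ell) = \cO_{K,\ell}^\times := (\cO_K\tensor\Z_\ell)^\times$, and hence $U := \hat\cO_K^\times$ has global canonical volume $1$. Using the standard identification $K^\times\backslash\mathbb{I}_{K,f}/U \iso \operatorname{Cl}(K)$ together with $K^\times\cap U = \cO_K^\times$, one obtains the exact sequence
\[
1 \to U/\cO_K^\times \to K^\times\backslash\mathbb{I}_{K,f} \to \operatorname{Cl}(K) \to 1.
\]
Since $\cO_K^\times$ is finite of order $w_K$ and acts freely by translation on the compact group $U$, this yields $\vol(T(\Q)\backslash T(\aff_f)) = h_K/w_K$.

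Finally, the analytic class number formula for the imaginary quadratic field $K$,
\[
L(1,\chi_K) = \frac{2\pi h_K}{w_K \sqrt{\abs{\Delta_K}}},
\]
is precisely \eqref{eq:desired} after rearrangement. The main obstacle I anticipate is the bookkeeping at ramified primes $\ell\mid \Delta_K$: one must verify that $\underline{T}^\circ(\Z_\ell) = \cO_{K,\ell}^\times$, i.e., that $\res_{\cO_K/\Z}\Gm$ really is the canonical connected integral model. This can be checked by computing $\cO_{K,\ell}/\ell \iso \F_\ell[\pi]/\pi^2$ and counting $\#(\F_\ell[\pi]/\pi^2)^\times = \ell(\ell-1)$, which matches $\ell^{\dim T}/L_\ell(1,\rep_T) = \ell^2(1-\ell^{-1})$ via Bitan's formula from \S\ref{sub:canonical}.
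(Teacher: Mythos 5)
Your proposal is correct and follows essentially the same route as the paper: identify $L(s,\rep_{T/Z})$ with the Dirichlet $L$-function of $K$, compute $\vol(T(\Q)\backslash T(\A_f)) = h_K/w_K$ via the idele class group exact sequence (this is exactly Lemma \ref{applem5} in the appendix, which the paper cites), and conclude with the analytic class number formula. Your extra verification that the connected N\'eron model at ramified primes has $\underline{T}^\circ(\Z_\ell) = (\cO_K\otimes\Z_\ell)^\times$ is a welcome detail that the paper leaves implicit.
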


\begin{proof} First, we note that $L(s, \rep_{T/Z})$ coincides with $L(s, K/\Q)$, the Dirichlet $L$-function attached to the quadratic character of $K$.
Now the proposition is obtained by combining Lemma \ref{applem5} with the analytic class number formula \eqref{appeq5}.
\end{proof}

\newpage
\appendix

\section{Orbital integrals and measure conversions}
\label{appali}
\begin{center}
S.~Ali Altu\u{g} 
\end{center}
In this appendix we relate certain calculations of \cite{Altug:2015aa}, initially intended for a different setting, to Gekeler's product formula \eqref{eqgekeler}. The strategy for the proof will follow the same lines as in the main text; the major differences are at the calculations of measure conversion factors and volumes. Since this appendix is to be complementary to the main text we will not aim for generality and simply take $q=p$, which is the case considered in \cite{gekeler03}. More precisely, we will be proving:

\begin{thm*}
{\textit{(Theorem \ref{th:main}, $q=p$-prime case)}} Let $p$ be a
prime and let $\mathbb{F}_p$ denote the finite field  with
$p$ elements. Let $a\in\mathbb{Z}$ such that $\abs{a}\leq 2\sqrt{p}$,
$p\nmid a$ and let $I(a,p)$ denote the isogeny class of elliptic curves over $\mathbb{F}_p$ with trace of their Frobenius equals $a$. For each finite prime $\ell$, let $v_\ell(a,p)$ be the local probabilities defined by $(1.1)$ and $(1.2)$. Then
\[\wnum I(a,p)=\frac{\sqrt{4p-a^2}}{2\pi}\prod_{\ell\neq\infty}v_\ell(a,p).\]
\end{thm*}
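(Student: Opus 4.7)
The plan is to follow the same outline as the proof of Theorem~\ref{th:main} in the main text, but streamlined because the hypothesis $q=p$ removes several technical complications. First I would invoke Proposition~\ref{proplk}. Since $\rat_q=\rat_p$ when $q=p$, the Frobenius $\sigma$ is trivial and the twisted orbital integral at $p$ collapses to an ordinary orbital integral: the double coset $G(\integ_q)\diag(1,p)G(\integ_q)$ becomes precisely $G(\integ_p)\diag(1,p)G(\integ_p)$, and $\delta_0$ may be taken to equal $\gamma_0$. Thus
\[
\wnum I(a,p)=\vol\bigl(G_{\gamma_0}(\rat)\bs G_{\gamma_0}(\aff_f)\bigr)\cdot O_{\gamma_0}^{\can}(\phi^p)\cdot O_{\gamma_0}^{\can}(\phi_p),
\]
where $\phi_p$ is the characteristic function of $G(\integ_p)\diag(1,p)G(\integ_p)$ and $\phi^p$ is the prime-to-$p$ characteristic function of $G(\hat\integ_f^p)$. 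No appeal to the fundamental lemma of base change (Lemma~\ref{lem:basechange}) is needed.

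Next I would match each local Gekeler ratio with an orbital integral. For $\ell\neq p$ this is Corollary~\ref{cor:gekeler-steinberg}, and for $\ell=p$ it is Lemma~\ref{lem:l=p,g=1} applied with $q=p$, so that $\phi_q=\phi_p$ is the correct test function on the nose. This converts the product $\prod_\ell \nu_\ell(a,p)$ into a product of orbital integrals taken with respect to the \emph{geometric} measure. The heart of the argument then becomes the conversion from geometric to canonical measure at every finite place, which is where the appendix is intended to offer a slightly different perspective from Proposition~\ref{prop:comp}. I would carry out this conversion by directly computing, for each $\ell$, the Jacobian of the Steinberg map along the orbit of $\gamma_0$ and the volume of the canonical parahoric of the centralizer torus $T=\res_{K/\rat}\Gm$, using the explicit description of $T$ rather than Gross's motivic formalism.

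Once these local conversion factors are assembled, I would combine them across all finite primes. The local discriminant factors $\abs{D(\gamma_0)}_\ell^{1/2}$ combine via the product formula with the archimedean piece $\sqrt{p}\,\nu_\infty(a,p)=\frac{1}{\pi}\sqrt{4p-a^2}$, using the identity $\abs{D(\gamma_0)\det(\gamma_0)}=4p-a^2$. The local $L$-factors $L_\ell(1,\rep_{T^\der})$ multiply to $L(1,\rep_{T/Z})=L(1,K/\rat)$, a Dirichlet $L$-value attached to the imaginary quadratic field $K=\rat(\sqrt{a^2-4p})$. The remaining $\abs{\Delta_K}_\ell^{-1/2}$ factors product-formula up to $\sqrt{\abs{\Delta_K}}$.

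The final step is to match the combination
\[
\sqrt{\abs{\Delta_K}}\,L(1,K/\rat)
\]
with the global volume $\vol\bigl(T(\rat)\bs T(\aff_f)\bigr)$ that appeared in the Langlands--Kottwitz decomposition. This is exactly the content of Proposition~\ref{lem:arch}, which in turn rests on the analytic class number formula for the imaginary quadratic field $K$. I expect the main obstacle to be bookkeeping: verifying that the combinatorial constants (powers of $\ell$, the $\#\SL_2(\F_\ell)$ denominators from the Gekeler normalization, and the local convergence factors) truly conspire with the product formula to reproduce the simple shape $\tfrac{\sqrt{4p-a^2}}{2\pi}\prod_\ell \nu_\ell(a,p)$, with no stray constants left over. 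Once the local measure-conversion calculation of~\cite{Altug:2015aa} is translated into the present normalization, the rest of the proof is a term-by-term identification with the right-hand side of the formula of Langlands and Kottwitz.
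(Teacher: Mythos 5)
Your proposal is correct and follows essentially the same route as the paper's own proof of this statement: the Langlands--Kottwitz expression (Proposition \ref{proplk}), the identification of each local density with a geometric orbital integral (Corollary \ref{cor:gekeler-steinberg} and Lemma \ref{lem:l=p,g=1} with $\phi_q=\phi_p$), a place-by-place comparison of the geometric and canonical measures on the orbit, and finally the product formula for $\abs{D(\gamma_0)}_\ell$ and $\abs{\Delta_K}_\ell$ combined with the global volume and the analytic class number formula (Proposition \ref{lem:arch}). Your deviations are cosmetic rather than structural: you rightly note that at $q=p$ the twisted integral is already an ordinary orbital integral so Lemma \ref{lem:basechange} is vacuous, and your ``direct Jacobian of the Steinberg map plus volume of the canonical parahoric of $T=\res_{K/\rat}\Gm$'' is exactly the comparison the appendix carries out via the quoted volumes $\vol(\abs{\omega_G}_\ell)$ and $\vol(\abs{\omega_T}_\ell)$, yielding the same conversion factor $\sqrt{\abs{\Delta_K}_\ell}\,\zeta_\ell(2)/\bigl(\sqrt{\abs{D(\gamma)}_\ell}\,L_\ell(1,K/\Q)\bigr)$; the only point worth adding is the appendix's remark that the resulting Euler product converges only conditionally and must be assembled in the stated order.
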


The proof consists of four steps. The first two steps, relating $\wnum I(a,p)$ and the local densities to orbital integrals (with respect to different measures (!)), are the same as in the text, so instead of giving a detailed exposition of these we simply refer to the relevant parts of the article. The third step is to find the constant of proportionality in the two measure normalizations and to calculate the global volume factor that appears in the Langlands-Kottwitz formula, and the final step is to put everything together via the class number formula.

\subsection{Steps 1 \& 2: Sizes of isogeny classes, orbital integrals, and local densities }\label{appsec0.1}

\subsubsection{Notation}
  Let $G:=\gl_2$ and let $\mathbb{A}$ be the adeles ${\mathbb
    A}=\mathbb{A}_{\mathbb{Q}}$. Following the notation of \S3.2 and
  3.3, for any prime $\ell\neq p$, let $\phi_{0}$ denote the
  characteristic function of the $G(\mathbb{Z}_\ell)$. For $\ell=p$,
  let $\phi_p$ be the characteristic function of
  $G(\mathbb{Z}_p)\left(\begin{smallmatrix}p&\\
      &1\end{smallmatrix}\right)G(\mathbb{Z}_p)$. Let $\gamma\in
  G(\mathbb{Q})$ be such that $\tr(\gamma)=a$ and $\det(\gamma)=p$. Note
  that since $a\in\mathbb{Z}$ and $\abs{a}\leq 2\sqrt{p}$, $\gamma$ is
  regular semisimple and its centralizer, $G_{\gamma}$, is a
  torus. Let us denote this torus by $T$. Finally, for any finite
  prime $\ell$ and $\phi\in C_c^{\infty}(G(\mathbb{Q}_\ell))$ let the
  orbital integrals $O_{\gamma}^\geom(\phi)$ and
  $O_{\gamma}^{\can}(\phi)$ be defined as in definitions 3.1 and 3.2
  respectively. We remark that the difference between
  $O_{\gamma}^\geom(\phi)$ and $O_{\gamma}^{\can}(\phi)$ is in the
  chosen measure on the orbit of $\gamma$. Instead of going through
  the details of these measures we simply refer to Sections \ref{sub:steinberg} and \ref{sub:measures} of
  the paper as well as \S3.3 of \cite{langlands-frenkel-ngo}.

\subsubsection{Sizes of isogeny classes to orbital integrals:
    Langlands-Kottwitz formula.}
For each finite prime $\ell$ let $d\mu_{G,\ell}^{\can}$ denote the measure on $G(\mathbb{Q}_\ell)$ normalized to give volume $1$ to $G(\mathbb{Z}_\ell)$. On the centralizer $T(\mathbb{Q}_\ell):=G_{\gamma}(\mathbb{Q}_\ell)$ choose any measure $d\mu_{T,\ell}$. Let $d\bar{\mu}_\ell$ denote the quotient measure $d\mu_{G,\ell}^{\can}/d\mu_{T,\ell}$, and define $d\mu_{T,f}=\otimes_{\ell\neq\infty}d\mu_{T,\ell}$. 

Proposition 2.1 combined with Lemma 3.7 states that
\begin{equation}\label{appeq1}
\#I(a,p)=\vol(\gamma,d\mu_{T,f})\int_{T(\mathbb{Q}_p)\backslash G(\mathbb{Q}_p)}\phi_p(g^{-1}\gamma g)d\bar{\mu}_p(g)\prod_{\substack{\ell\text{ finite}\\ \ell\neq p}}\int_{T(\mathbb{Q}_\ell)\backslash G(\mathbb{Q}_\ell)}\phi_0(g^{-1}\gamma g)d\bar{\mu}_\ell(g),\end{equation}
where 
\begin{equation}\label{appeq2}
\vol(\gamma,d\mu_{T,f}):=\int_{T(\mathbb{Q})\backslash T(\mathbb{A}_f)}d\mu_{T,f},
\end{equation} 
and $\mathbb{A}_f=\prod^{'}_{\ell\text{ finite}}\mathbb{Q}_\ell$ denotes
the finite adeles. We also remark that the orbital integrals and the
volume in \eqref{appeq1} depend only on the conjugacy class of
$\gamma$. Since, for semisimple elements, conjugacy in $ G(\mathbb{Q})$ is equivalent to
having the same characteristic polynomial (note that this uses the
fact that $G=\gl_2$), \eqref{appeq1} is well defined.

\subsubsection{Orbital integrals to local densities.}
  As in \S\ref{appsec0.1} fix a prime $p$, an integer $\abs{a}\leq
  2\sqrt{p}$, and $\gamma\in G(\mathbb{Q})$ such that
  $\tr(\gamma)=a$ and $\det(\gamma)=p$. Let the local densities
  $v_\ell(a,p)$ be defined by (1.1) and (1.2), and for any $\phi\in
  C_c^{\infty}(\mathbb{Q}_\ell)$ let $O^\geom_{\gamma}(\phi)$ be the
  orbital integrals, with respect to the geometric measure (cf. \S2.2
  and 3.1), defined as in definition 3.1. Then, recalling that $\#
  SL_2(\mathbb{F}_\ell)=\ell^3\zeta_\ell(2)^{-1}$, Corollary 3.5 and
  Lemma 3.6 can be stated as
\begin{align}
v_\ell(a,p)&=\zeta_\ell(2)O^\geom_{\gamma}(\phi_0)\label{appeq3}\\
v_p(a,p)&=\zeta_p(2)O^\geom_{\gamma}(\phi_p)\label{appeq4},
\end{align}
where $\phi_0$ and $\phi_1$ are as in \S\ref{appsec0.1}, and $\zeta_\ell(s)=1/(1-\ell^{-s})$.

\subsection{Step 3: Measure conversions and orbital integrals}

Since the orbital integrals $O^\geom_{\gamma}(\phi)$ and
$O^{\can}_{\gamma}(\phi)$ are defined with respect to different measure
normalizations on the orbit of $\gamma$, in order to compare
\eqref{appeq3} and \eqref{appeq4} to \eqref{appeq1} we will need to
relate integration against the quotient measure
$d\bar{\mu}_\ell=d\mu^{\can}_{G,\ell}/d\mu_{T,\ell}$ to integration
against $d\mu^\geom_\ell$. In order to do so, we start with a lemma
that provides the conversion factor for a general quotient measure,
$d\mu_{T\backslash G,\ell}:=d\mu_{G,\ell}/d\mu_{T,\ell}$.

\begin{lemma}\label{applemma0}Let $\ell$ be a finite prime, $\gamma\in G(\mathbb{Q})$, and $T=G_{\gamma}$ be as above.  For any Haar measures $d\mu_{G,\ell}$ on $G$ and $d\mu_{T,\ell}$ on $T$ let $\vol(d\mu_{G,\ell})$ be the volume of $G(\mathbb{Z}_\ell)$ with respect to $d\mu_{G,\ell}$ and similarly let $vol(d\mu_{T,\ell})$ denote the volume of $T(\mathbb{Z}_\ell)$. Let $d\mu_{T\backslash G,\ell}$ denote the quotient measure $d\mu_{G,\ell}/d\mu_{T,\ell}$. Let $\omega_G$ and $\omega_T$ be non-vanishing algebraic top degree forms on $G$ and $T$ respectively. Denote the measured associated to $\omega_{G}$ and $\omega_T$ by $\abs{\omega_{G}}_\ell$ and $\abs{\omega_T}_\ell$ respectively, and denote the induced quotient measure on $T\backslash G$ by $\abs{\omega_{T\backslash G}}_\ell$. Finally, let $d\mu_{\gamma,\ell}^\geom$ be the geometric measure of \S3.1.3 induced by $\omega_{G}$. Then,
\[d\mu_{\gamma,\ell}^\geom=\sqrt{\abs{D(\gamma)}_\ell}\frac{\vol(\abs{\omega_G}_\ell)\vol(d\mu_{T,\ell})}{\vol(\abs{\omega_T}_\ell)\vol(d\mu_{G,\ell})}d\bar{\mu}_{T\backslash G,\ell},\]
where, by abuse of notation, $\abs{D(\gamma)}=\abs{\tr(\gamma)^2-4\det(\gamma)}$.

\end{lemma}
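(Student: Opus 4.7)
The plan is to establish the lemma by composing two separate conversions: first from the geometric measure $d\mu_{\gamma,\ell}^{\geom}$ to the form-theoretic quotient measure $|\omega_{T\backslash G}|_\ell = |\omega_G|_\ell/|\omega_T|_\ell$, and then from $|\omega_{T\backslash G}|_\ell$ to the Haar-theoretic quotient measure $d\bar{\mu}_{T\backslash G,\ell} = d\mu_{G,\ell}/d\mu_{T,\ell}$.

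For the first step, I would write two different expressions for $\int_{G(\Q_\ell)}\phi\,|\omega_G|_\ell$ and equate them. On one hand, the defining relation $\omega_G = \omega^{\geom}_{\fc(\gamma)}\wedge \omega_A$ of \S3.1.3 gives
\[
\int_{G(\Q_\ell)}\phi(g)\,|\omega_G|_\ell = \int_{A(\Q_\ell)}\int_{\fc^{-1}(a)}\phi\,d|\omega^{\geom}_a|\,|d\omega_A|_\ell.
\]
On the other hand, the Weyl integration formula for $G=\gl_2$, applied to the Haar measures $|\omega_G|_\ell$ and $|\omega_T|_\ell$, gives
\[
\int_{G(\Q_\ell)}\phi(g)\,|\omega_G|_\ell = \frac{1}{|W|}\int_{T(\Q_\ell)}|D_{\mathrm{Weyl}}(\gamma)|_\ell\int_{T\backslash G}\phi(g^{-1}\gamma g)\,d|\omega_{T\backslash G}|_\ell\,|\omega_T|_\ell,
\]
where for $\gl_2$ the Weyl discriminant equals $|D(\gamma)|_\ell/|\det(\gamma)|_\ell$ in the appendix's convention. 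Equating the two requires reparametrizing the outer integral along the $|W|$-to-one map $\fc|_T\colon T\to A$. Working in eigenvalue coordinates $(\lambda_1,\lambda_2)$ on $T$, the map $\fc|_T$ becomes the elementary symmetric functions $(t,s)=(\lambda_1+\lambda_2,\lambda_1\lambda_2)$, and a direct Jacobian computation with $|d\omega_A|_\ell=|dt|_\ell\wedge|ds/s|_\ell$ produces a factor of $|\lambda_1-\lambda_2|_\ell\,|\det(\gamma)|_\ell^{-1}$. After the $|\det(\gamma)|_\ell$-factors cancel against the Weyl discriminant, this yields the key identity
\[
d\mu^{\geom}_{\gamma,\ell} = \sqrt{|D(\gamma)|_\ell}\,d|\omega_{T\backslash G}|_\ell.
\]

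For the second step, any two Haar measures on a locally compact group are scalar multiples. Hence $|\omega_G|_\ell = (\vol(|\omega_G|_\ell)/\vol(d\mu_{G,\ell}))\,d\mu_{G,\ell}$, where the volumes are taken over $G(\Z_\ell)$, and similarly for $T$. Propagating these scalar relations to the quotient gives
\[
d|\omega_{T\backslash G}|_\ell = \frac{\vol(|\omega_G|_\ell)\,\vol(d\mu_{T,\ell})}{\vol(|\omega_T|_\ell)\,\vol(d\mu_{G,\ell})}\,d\bar{\mu}_{T\backslash G,\ell}.
\]
Combining this with the identity from the first step yields the lemma.

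The main obstacle is the Jacobian calculation in Step 1: one must carefully track how the appendix's convention $|D(\gamma)|=|\tr(\gamma)^2-4\det(\gamma)|$ (rather than the true Weyl discriminant) interacts with the choice of form $\omega_T$, and verify that the $|\det(\gamma)|_\ell$ factors cancel exactly as claimed. This is cleanest for a split torus, where the computation is essentially the one carried out in the proof of Lemma 4.3 of the main text; for a non-split torus, one would carry out the Jacobian computation after base change to the splitting field and then descend, taking advantage of the fact that the computation of Step 2 only involves the volumes of compact subgroups, which transform in a controlled way under restriction of scalars.
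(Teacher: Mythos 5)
Your Step 2 (rescaling the two pairs of Haar measures by the volumes of $G(\Z_\ell)$ and $T(\Z_\ell)$ and pushing the constants to the quotient) is exactly the paper's argument. For Step 1 the paper does not recompute anything: it quotes equation (3.30) of the Frenkel--Langlands--Ng\^o paper, after remarking that both measures are invariant under the center so one may reduce to $G^{\der}$. Re-deriving that identity from the Weyl integration formula, as you propose, is a legitimate alternative, but your bookkeeping goes wrong at precisely the sensitive point. The pullback of $\omega_A=dt\wedge ds/s$ under $(\lambda_1,\lambda_2)\mapsto(\lambda_1+\lambda_2,\lambda_1\lambda_2)$ equals $(\lambda_1-\lambda_2)\,\tfrac{d\lambda_1}{\lambda_1}\wedge\tfrac{d\lambda_2}{\lambda_2}$, so relative to the \emph{invariant} form on $T$ --- the only choice compatible with invoking the Weyl integration formula and with forming the invariant quotient $\abs{\omega_{T\backslash G}}_\ell$ --- the Jacobian factor is $\abs{\lambda_1-\lambda_2}_\ell$, with no $\abs{\det\gamma}_\ell^{-1}$; your factor $\abs{\lambda_1-\lambda_2}_\ell\abs{\det\gamma}_\ell^{-1}$ is the Jacobian relative to the non-invariant form $d\lambda_1\wedge d\lambda_2$. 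With a consistent invariant choice your computation yields $d\mu^{\geom}=\bigl(\sqrt{\abs{D(\gamma)}_\ell}/\abs{\det\gamma}_\ell\bigr)\abs{\omega_{T\backslash G}}_\ell$: the determinants do \emph{not} cancel as you claim.

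The compensating subtlety you do not address is that the appendix's $\omega_G=d\alpha\,d\beta\,d\gamma\,d\delta/(\alpha\delta-\beta\gamma)$ is not a translation-invariant form on $\GL_2$ (the Haar form has the square of the determinant in the denominator), so the Weyl integration formula cannot be applied to $\abs{\omega_G}_\ell$ as you do. It is conjugation-invariant, which is why its disintegration along $\fc$ still produces invariant measures on orbits; and since $\det$ is constant on each fiber, that geometric measure differs from the one built from the Haar form by exactly the factor $\abs{\det\gamma}_\ell$, which is what restores the stated identity with $D(\gamma)=\tr(\gamma)^2-4\det(\gamma)$. So your two slips cancel numerically --- you land on the right formula --- but the argument as written does not prove it, and the discrepancy is invisible for $\ell\neq p$ (where $\abs{\det\gamma_0}_\ell=1$) while being a genuine power of $p$ at $\ell=p$, the one place where this precision is the whole point. (Sanity check for $q=p$, ordinary split case: $O^{\can}_{\gamma_0}(\phi_p)=1$ while the appendix's normalizations force $O^{\geom}_{\gamma_0}(\phi_p)=1+1/p$, consistent with the lemma since $\abs{D(\gamma_0)}_p=1$; a stray $\abs{\det\gamma_0}_p^{\pm1}$ or a Weyl-discriminant $\sqrt{p}$ would break this.) To repair your route, either run the Weyl-integration argument with the invariant form and then convert to the appendix's $\omega_G$ by the factor $\abs{\det\gamma}_\ell$, or redo the disintegration directly for the conjugation-invariant measure $\abs{\omega_G}_\ell$ and track that factor. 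Finally, for non-split $T$ the ``base change and descend'' step is not purely formal: the ramified constant $\abs{c_T}$ of Lemma 4.3 (the $\abs{\Delta_K}_\ell^{1/2}$) must be accounted for; in the paper's route it never enters Step 1 at all but reappears in $\vol(\abs{\omega_T}_\ell)$ in Step 2, which is what makes the citation-plus-rescaling argument cleaner than a coordinate computation.
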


\begin{proof}
By equation (3.30) of \cite{langlands-frenkel-ngo}, we have
\[d\mu_{\gamma,\ell}^\geom=\sqrt{\abs{D(\gamma)}_\ell}\abs{\omega_{T\backslash G}}_\ell,\]

where we note that the left hand side of (3.30) of loc. cit. is what
we denoted by $d\mu_{\gamma}^\geom$. Here, we need to remind the
reader that both measures are invariant under multiplication by
central elements so without loss of generality we can assume that
$G=G^\der$ (i.e. $D(\gamma)=D_{G^\der}(\gamma)\det(\gamma)$).   On the
other hand, since the Haar measure is unique up to a constant we have
$\abs{\omega_G}_\ell=c_\ell(G)d\mu_{G,\ell}$ and
$\abs{\omega_T}_\ell=c_\ell(T)d\mu_{T,\ell}$. The constants can be
calculated easily by volumes of the integral points:
\[c_\ell(G)=\frac{\vol(\abs{\omega_G}_\ell)}{\vol(d\mu_{G,\ell})}\qquad\text{
  and }\qquad c_\ell(T)=\frac{\vol(\abs{\omega_T}_\ell)}{\vol(d\mu_{T,\ell})}.\]
Therefore, the quotient measures $d\bar{\mu}_{T\backslash G,\ell}$ and $\abs{\omega_{T\backslash G}}_\ell$ are related by
\[\abs{\omega_{T\backslash G}}_\ell=\frac{c_\ell(G)}{c_\ell(T)}d\bar{\mu}_{T\backslash G,\ell}.\]
The lemma follows.
\end{proof}

As an immediate corollary to Lemma \ref{applemma0} we get

\begin{corollary}\label{appcor1} Let $d\mu_{G,\ell}^{\can}$ and $d\mu_{T,\ell}^{\can}$ be normalized to give measure $1$ to $G(\mathbb{Z}_\ell)$ and $T(\mathbb{Z}_\ell)$ respectively, and let the rest of the notation be as in Lemma \ref{applemma0}. Then
\[d\mu_{\gamma,\ell}^\geom=\sqrt{\abs{D(\gamma)}_\ell}\frac{\vol(\abs{\omega_G}_\ell)}{\vol(\abs{\omega_T}_\ell)}d\bar{\mu}_{T\backslash G,\ell}.\]

\end{corollary}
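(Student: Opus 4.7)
The plan is to deduce Corollary \ref{appcor1} as an immediate specialization of Lemma \ref{applemma0}. The lemma gives, for \emph{any} pair of Haar measures $d\mu_{G,\ell}$ on $G(\Q_\ell)$ and $d\mu_{T,\ell}$ on $T(\Q_\ell)$, the identity
\[
d\mu_{\gamma,\ell}^{\geom} \;=\; \sqrt{\abs{D(\gamma)}_\ell}\,\frac{\vol(\abs{\omega_G}_\ell)\vol(d\mu_{T,\ell})}{\vol(\abs{\omega_T}_\ell)\vol(d\mu_{G,\ell})}\,d\bar\mu_{T\backslash G,\ell},
\]
where the volumes refer to $G(\Z_\ell)$ and $T(\Z_\ell)$ respectively, and $d\bar\mu_{T\backslash G,\ell} = d\mu_{G,\ell}/d\mu_{T,\ell}$.

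The only thing to do is specialize this formula to the canonical choices $d\mu_{G,\ell} = d\mu_{G,\ell}^{\can}$ and $d\mu_{T,\ell} = d\mu_{T,\ell}^{\can}$. By the normalization given in the statement of the corollary, these measures assign volume exactly $1$ to $G(\Z_\ell)$ and $T(\Z_\ell)$ respectively; that is, $\vol(d\mu_{G,\ell}^{\can}) = \vol(d\mu_{T,\ell}^{\can}) = 1$. Substituting these two values into the ratio from Lemma \ref{applemma0} collapses the factor $\vol(d\mu_{T,\ell})/\vol(d\mu_{G,\ell})$ to $1$, and one is left with precisely
\[
d\mu_{\gamma,\ell}^{\geom} \;=\; \sqrt{\abs{D(\gamma)}_\ell}\,\frac{\vol(\abs{\omega_G}_\ell)}{\vol(\abs{\omega_T}_\ell)}\,d\bar\mu_{T\backslash G,\ell},
\]
which is the asserted identity.

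There is essentially no obstacle here: the corollary is purely a bookkeeping specialization of the lemma, obtained by plugging in the defining property of the canonical measures. The one point worth flagging is consistency of notation: the appendix's ``canonical'' measure on the torus is normalized by $T(\Z_\ell)$ as it appears in Lemma \ref{applemma0}, which may differ from the convention used in the body of the paper (where the canonical measure on a torus gives volume $1$ to $\underline{T}^\circ(\Z_\ell)$). Within the self-contained setup of the appendix, however, both $\vol(d\mu_{G,\ell}^{\can})$ and $\vol(d\mu_{T,\ell}^{\can})$ are $1$ by construction, and the proof is complete in a single line.
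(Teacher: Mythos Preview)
Your proof is correct and matches the paper's approach exactly: the paper states the corollary ``as an immediate corollary to Lemma~\ref{applemma0}'' with no further argument, and your specialization by plugging in $\vol(d\mu_{G,\ell}^{\can})=\vol(d\mu_{T,\ell}^{\can})=1$ is precisely what is intended. Your remark about the torus normalization convention is a nice observation but not needed for the proof itself.
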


We now quote a result of \cite{Langlands:2013aa}, where $\vol(\abs{\omega_G}_\ell)$ and $\vol(\abs{\omega_T}_\ell)$ are calculated for a specific pair of forms $\omega_{G}$ and $\omega_{T}$.  Let
\begin{equation}
\label{eqdefomega}
\omega_G=\frac{ d\alpha d\beta d\gamma d\delta}{\alpha\delta-\beta\gamma}\qquad\text{ and }\qquad\omega_T=\frac{d\gamma_1 d\gamma_2}{\gamma_1\gamma_2}
\end{equation}
where we have chosen coordinates
\begin{align*}
G &= \left\{\begin{pmatrix}\alpha&\beta\\\gamma&\delta\end{pmatrix}: \alpha\delta-\beta\gamma\not = 0 \right\}\\
T &= \left\{\begin{pmatrix}\gamma_1&\\&\gamma_2\end{pmatrix}: \gamma_1\gamma_2\not=0\right\}.
\end{align*}
Note that $\omega_G$ is the same form studied in Section \ref{sub:measures}.

\begin{lemma}\label{applemma2} 
For $\omega_G$ and $\omega_T$ as in \eqref{eqdefomega},
\begin{align*}
\vol(\abs{\omega_G}_\ell)&=\zeta_\ell(1)^{-1}\zeta_\ell^{-1}(2)\\
\vol(\abs{\omega_T}_\ell)&=\sqrt{\abs{\Delta_K}_\ell}\begin{cases}\zeta_\ell(1)^{-2}& \text{$K/\mathbb{Q}$ is split at $\ell$}\\ \zeta_\ell(2)^{-1}&  \text{$K/\mathbb{Q}$ is unramified at $\ell$}\\ \zeta_\ell(1)^{-1}& \text{$K/\mathbb{Q}$ is ramified at $\ell$}\end{cases},
\end{align*}
where $K/\mathbb{Q}$ is the quadratic extension where $T$ splits over and $\Delta_K$ is the discriminant of $K$ (so in the split and unramified cases $\abs{\Delta_K}_\ell$ is $1$).
\end{lemma}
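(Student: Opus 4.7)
The plan is to compute each of the two volumes directly, working in the coordinates provided.

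For $\vol(\abs{\omega_G}_\ell)$, the key observation is that on $\GL_2(\Z_\ell) \subset M_2(\Z_\ell) = \Z_\ell^4$, the determinant $\alpha\delta-\beta\gamma$ is a unit, so $\abs{\alpha\delta-\beta\gamma}_\ell = 1$; hence $\abs{\omega_G}_\ell$ restricts to the standard additive measure $\abs{d\alpha\, d\beta\, d\gamma\, d\delta}_\ell$, which assigns $\Z_\ell^4$ volume one. The volume of $\GL_2(\Z_\ell)$ is then just the density of $\GL_2(\F_\ell)$ inside $M_2(\F_\ell)$, giving $\#\GL_2(\F_\ell)/\ell^4 = (\ell^2-1)(\ell^2-\ell)/\ell^4 = (1-\ell^{-1})(1-\ell^{-2}) = \zeta_\ell(1)^{-1}\zeta_\ell(2)^{-1}$.

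For $\vol(\abs{\omega_T}_\ell)$ I would split into the three cases according to the behavior of $K$ at $\ell$. In the split case, $T \otimes \Q_\ell \iso \Gm\times\Gm$ canonically, and on $T(\Z_\ell) = (\Z_\ell^\times)^2$ one has $\abs{\gamma_i}_\ell = 1$, so $\abs{\omega_T}_\ell$ agrees with $\abs{d\gamma_1\, d\gamma_2}_\ell$; the volume is $\vol(\Z_\ell^\times)^2 = \zeta_\ell(1)^{-2}$, matching the formula since $\abs{\Delta_K}_\ell = 1$.

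The non-split cases require the trick already used in the lemma of \S\ref{sec:comparison} computing $c_T$. Write $L = K\tensor \Q_\ell = \Q_\ell(\sqrt{D})$ and parametrize $T(\Q_\ell) = L^\times$ by $(x,y)\mapsto z = x + y\sqrt{D}$; the diagonalization map $\psi\colon T_L \ra \Gm\times\Gm$, $z\mapsto(z,\bar z)$, pulls $\omega_T^{\spl} = d\gamma_1/\gamma_1 \wedge d\gamma_2/\gamma_2$ back to $-2\sqrt{D}\, dx\wedge dy/(x^2 - Dy^2)$, so $\abs{\omega_T}_\ell = \abs{2\sqrt{D}}_\ell \cdot \abs{dx\, dy}_\ell/\abs{N(z)}_\ell$ on $T(\Q_\ell)$. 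On $T(\Z_\ell) = \cO_L^\times$ the factor $\abs{N(z)}_\ell$ is $1$, so the integral becomes $\abs{2\sqrt{D}}_\ell$ times the $\abs{dx\, dy}_\ell$-volume of $\cO_L^\times$. In the inert case we may take $D$ a non-square unit; then $\cO_L^\times = \{(x,y)\in\Z_\ell^2 : x^2 - Dy^2 \in \Z_\ell^\times\}$, whose complement mod $\ell$ is just $\{(0,0)\}$ (since $D$ is a non-square in $\F_\ell$), giving volume $1-\ell^{-2} = \zeta_\ell(2)^{-1}$; combined with $\abs{2\sqrt{D}}_\ell = 1$ and $\sqrt{\abs{\Delta_K}_\ell}=1$, the formula holds. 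In the ramified case we take $v_\ell(D) = 1$; then $\cO_L^\times = \{(x,y) \in \Z_\ell^2: x \in \Z_\ell^\times\}$ has $\abs{dx\, dy}_\ell$-volume $1 - \ell^{-1} = \zeta_\ell(1)^{-1}$, and $\abs{2\sqrt{D}}_\ell = \ell^{-1/2} = \sqrt{\abs{\Delta_K}_\ell}$, again matching.

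The main obstacle I anticipate is the case $\ell = 2$: the parametrization $L = \Q_2(\sqrt{D})$ does not uniformly describe all quadratic extensions of $\Q_2$, and the factor $\abs{2}_2 = 1/2$ interacts nontrivially with the discriminants of wildly ramified extensions. This can be treated by a separate case-by-case analysis using explicit uniformizers for each of the seven quadratic extensions of $\Q_2$ and their discriminants, or by appealing directly to the volume computations in \cite{Langlands:2013aa}, from which this lemma is essentially quoted.
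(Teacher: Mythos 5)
Your computation is correct, and it supplies exactly the content that the paper's own proof outsources: the paper simply quotes the volume computations on pp.~41--42 of \cite{Langlands:2013aa} for odd $\ell$ and remarks that $\ell=2$ "follows the same lines" up to a factor of $2$, whereas you verify the formulas directly. Your $\GL_2$ computation and the three torus cases for odd $\ell$ are exactly right (unit determinant/norm kills the denominator on integral points, the Jacobian $\abs{2\sqrt D}_\ell$ accounts for $\sqrt{\abs{\Delta_K}_\ell}$, and the residue counts give $\zeta_\ell(1)^{-2}$, $\zeta_\ell(2)^{-1}$, $\zeta_\ell(1)^{-1}$). For $\ell=2$ you can avoid the seven-extension case analysis: the only failure of your parametrization is that $\{1,\sqrt D\}$ need not be an integral basis (e.g.\ in the unramified case $\cO_L=\Z_2[(1+\sqrt D)/2]$), so instead write $z=x+y\omega$ with $\{1,\omega\}$ an integral basis of $\cO_L$; then the pullback of $d\gamma_1/\gamma_1\wedge d\gamma_2/\gamma_2$ is $(\bar\omega-\omega)\,dx\wedge dy/N(z)$, and $\abs{\omega-\bar\omega}_\ell=\sqrt{\abs{\Delta_K}_\ell}$ is precisely the different, so the stated formula follows uniformly for all $\ell$, with the additive volume of $\cO_L^\times$ equal to $1-\ell^{-2}$ (inert) or $1-\ell^{-1}$ (ramified). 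This absorbs the "extra factor of $2$" the paper alludes to and makes your argument fully self-contained, which is arguably preferable to the paper's citation-only proof.
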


\begin{proof}The result for odd primes $\ell$ is given on pages 41 and
  42 of \cite{Langlands:2013aa}. The case for $\ell=2$ follows the
  same lines.   The only point to keep in mind is the extra factor of $2$ that appears in the calculation of the differential form on page 42 of \cite{Langlands:2013aa}; we leave the details to the reader.

\end{proof}

\begin{corollary}\label{appcor2} Let the measure choices be as in Lemma \ref{applemma2}. Then for any $\phi\in C_c^{\infty}(G(\mathbb{Q}_\ell))$ and any regular semi-simple $\gamma$,

\[O^{\can}_{\gamma}(\phi)=\frac{\sqrt{\abs{\Delta_K}_\ell}\zeta_\ell(2)}{\sqrt{\abs{D(\gamma)}_\ell}L_\ell(1,K/\mathbb{Q})}O^\geom_{\gamma}(\phi).\]
\end{corollary}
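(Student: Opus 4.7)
The plan is to substitute the explicit volume computations of Lemma \ref{applemma2} into the general conversion formula of Corollary \ref{appcor1}, and then collapse the resulting case-dependent factor into the uniform expression $L_\ell(1, K/\mathbb{Q})$.

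First I would observe that, with the measure choices of Lemma \ref{applemma2}, the normalized quotient measure $d\bar{\mu}_{T\backslash G,\ell}$ appearing in Corollary \ref{appcor1} coincides with $d\mu^{\can}_{\gamma,\ell}$, since both $G(\mathbb{Z}_\ell)$ and $T(\mathbb{Z}_\ell)$ are assigned canonical volume one. Thus Corollary \ref{appcor1} specializes to
\[
d\mu^{\geom}_{\gamma,\ell} \;=\; \sqrt{\abs{D(\gamma)}_\ell}\,\frac{\vol(\abs{\omega_G}_\ell)}{\vol(\abs{\omega_T}_\ell)}\,d\mu^{\can}_{\gamma,\ell}.
\]

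Next I would plug in the explicit values from Lemma \ref{applemma2}. The factors $\sqrt{\abs{\Delta_K}_\ell}$ and $\zeta_\ell(1)^{-1}\zeta_\ell(2)^{-1}$ emerge uniformly; what remains is a case-dependent factor indexed by the splitting behavior of $K/\mathbb{Q}$ at $\ell$. The key identity to deploy is $\zeta_\ell(2) = \zeta_\ell(1)\cdot(1+\ell^{-1})^{-1}$, combined with the explicit Euler factors $L_\ell(1, K/\mathbb{Q}) = \zeta_\ell(1)$, $(1+\ell^{-1})^{-1}$, and $1$ in the split, inert-unramified, and ramified cases respectively. A short case-by-case check then shows that the three apparently distinct cases of Lemma \ref{applemma2} collapse to the single clean statement
\[
\frac{\vol(\abs{\omega_G}_\ell)}{\vol(\abs{\omega_T}_\ell)} \;=\; \frac{L_\ell(1, K/\mathbb{Q})}{\sqrt{\abs{\Delta_K}_\ell}\,\zeta_\ell(2)}.
\]

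Finally, inverting this relation and integrating a test function $\phi$ against both measures yields the claimed identity between $O^{\can}_\gamma(\phi)$ and $O^{\geom}_\gamma(\phi)$. There is no real obstacle here; the only step requiring genuine thought is the three-case recognition of the local $L$-factor, which becomes routine once one notices that $(1+\ell^{-1})^{-1}$ is precisely the inert-unramified Euler factor. The conceptual payoff is that this packages the disparate cases of Lemma \ref{applemma2} into a uniform conversion formula in which $L_\ell(1, K/\mathbb{Q})$ appears naturally; this is what will allow the global product over finite primes to be recognized via the analytic class number formula in Step 4.
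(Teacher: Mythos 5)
Your argument is exactly the paper's proof: substitute the volume ratios of Lemma \ref{applemma2} into Corollary \ref{appcor1}, recognize $\vol(\abs{\omega_G}_\ell)/\vol(\abs{\omega_T}_\ell)=L_\ell(1,K/\mathbb{Q})/\bigl(\sqrt{\abs{\Delta_K}_\ell}\,\zeta_\ell(2)\bigr)$ by the same three-case check on the splitting behavior of $K$ at $\ell$, and invert to convert $O^{\geom}_\gamma$ into $O^{\can}_\gamma$. Your explicit case values are the correct ones (the paper's intermediate display writes the split and inert factors as $(1+1/\ell)^{-1}$ and $(1-1/\ell)^{-1}$ instead of $1+1/\ell$ and $1-1/\ell$, an inversion typo, but its final identification agrees with yours).
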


\begin{proof} By Lemma \ref{applemma2} we have 

\[\frac{\vol(\abs{\omega_G}_\ell)}{\vol(\abs{\omega_T}_\ell)}=\tfrac{1}{\sqrt{\abs{\Delta_K}_\ell}}\begin{cases}\frac{1}{(1+1/q_\ell)}&split\\ \frac{1}{(1-1/q_\ell)}& unramified \\ (1-1/q_\ell^2)& ramified\end{cases}=\frac{L_\ell(1,K/\mathbb{Q})}{\sqrt{\abs{\Delta_K}_\ell}\zeta_\ell(2)}.\]
The corollary then follows from Corollary \ref{appcor1}.
\end{proof}

The last step is to calculate the global volume term.

\begin{lemma}\label{applem5}
Let $(a,p)$ be such that $a^2-4p<0$. Let $d\mu_{T,\ell}^{\can}$ be the Haar measure normalized to give measure $1$ to $T(\mathbb{Z}_\ell)$ and set $d\mu_{T,f}^{\can}:=\otimes_{\ell\neq\infty}d\mu_{T,\ell}^{\can}$. Then,
\[\vol({\gamma},d\mu_{T,f}^{\can})=\frac{h_K}{w_K},\]
here $K/\mathbb{Q}$ is the quadratic extension where $T$ splits, $w_K$ is the number of roots of unity in $K$, and $h_K$ is its class number.

\end{lemma}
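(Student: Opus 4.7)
The plan is to unwind the adelic volume as an orbifold count of the ideal class group, using the fact that the torus $T$ is a restriction of scalars from $K$. Since $\gamma_0$ has characteristic polynomial $T^2 - aT + p$ with $a^2 - 4p < 0$, its centralizer is $T = \res_{K/\Q}\Gm$ where $K = \Q[T]/(T^2-aT+p)$ is imaginary quadratic. Accordingly, $T(\Q) = K^\times$, $T(\A_f) = \A_{K,f}^\times$, and the key identification at each finite place is that the neutral component of the canonical (N\'eron) integral model satisfies $\underline{T}^\circ(\Z_\ell) = (\mathcal{O}_K\otimes_\Z \Z_\ell)^\times$. This last point is standard for restriction of scalars of $\Gm$ along an \'etale morphism, and at ramified primes it still holds for $\underline{T}^\circ$ (any ramification contributes to the component group, not to the identity component). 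Taking the product, the canonical measure on $T(\A_f)$ gives $\widehat{\mathcal{O}}_K^\times := \prod_\ell (\mathcal{O}_K\otimes\Z_\ell)^\times$ total volume $1$.

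Next I would invoke the standard two-step decomposition of the finite id\`ele class group. From the surjection $\A_{K,f}^\times \twoheadrightarrow \mathrm{Cl}(K)$ with kernel $K^\times \widehat{\mathcal{O}}_K^\times$ we get
\begin{equation*}
\vol\bigl(K^\times\backslash \A_{K,f}^\times\bigr) \;=\; \vol\bigl(K^\times\backslash K^\times \widehat{\mathcal{O}}_K^\times\bigr)\cdot h_K,
\end{equation*}
and the natural isomorphism $K^\times\backslash K^\times \widehat{\mathcal{O}}_K^\times \iso (K^\times \cap \widehat{\mathcal{O}}_K^\times)\backslash \widehat{\mathcal{O}}_K^\times = \mathcal{O}_K^\times \backslash \widehat{\mathcal{O}}_K^\times$ turns this into
\begin{equation*}
\vol\bigl(K^\times\backslash \A_{K,f}^\times\bigr)\;=\; \frac{\vol(\widehat{\mathcal{O}}_K^\times)}{\#\mathcal{O}_K^\times}\cdot h_K.
\end{equation*}
Here I use that $\mathcal{O}_K^\times$ sits discretely (indeed, with finite image) in $\widehat{\mathcal{O}}_K^\times$, which holds precisely because $K$ is imaginary quadratic, so that $\mathcal{O}_K^\times$ is the finite group of roots of unity of order $w_K$.

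Combining the two inputs, namely $\vol(\widehat{\mathcal{O}}_K^\times)=1$ by the definition of $d\mu_{T,f}^{\can}$ and $\#\mathcal{O}_K^\times = w_K$, we obtain $\vol(\gamma, d\mu_{T,f}^{\can}) = h_K/w_K$, as required. The only step that requires genuine attention is the identification $\underline{T}^\circ(\Z_\ell) = (\mathcal{O}_K\otimes\Z_\ell)^\times$ at the primes $\ell \mid \Delta_K$; one must be careful that the lemma uses the neutral component of the N\'eron model (in the sense of \S\ref{subsub:gross}), which is exactly what makes the product $\vol(\widehat{\mathcal{O}}_K^\times)=1$ legitimate at ramified places. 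Everything else is the classical translation between the adelic volume of an imaginary quadratic id\`ele class group and the class number divided by the number of roots of unity.
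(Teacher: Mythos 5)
Your proof is correct and takes essentially the same route as the paper's: identify $T(\Q)\backslash T(\A_f)$ with $K^\times\backslash \A_{K,f}^\times$, observe that the canonical measure gives $\widehat{\mathcal{O}}_K^\times$ volume one, and then use the exact sequence onto $\operatorname{Cl}(K)$ together with $K^\times\cap\widehat{\mathcal{O}}_K^\times=\mathcal{O}_K^\times$, the group of roots of unity of order $w_K$. The only difference is that you make explicit (and justify at ramified primes) the identification $\underline{T}^\circ(\Z_\ell)=(\mathcal{O}_K\otimes\Z_\ell)^\times$, which the paper's proof leaves implicit.
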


\begin{proof}By identifying $T=G_{\gamma}$ with $\mathbb{G}_{m}$ over the quadratic extension $K$ we have 
\[\vol({\gamma},d\mu_{T,f}^\can)=\mu_{T,f}^\can(T(\mathbb{Q})\backslash T(\mathbb{A}_f))=\mu_{K,f}^\can(K^{\times}\backslash \mathbb{A}_{K,f}^{\times}),\]
where the measure on the right is such that $\mu^\can_{K,f}(O_{v}^{\times})=1$ for each place $v$. Let $\hat{\mathcal{O}}_K^{\times}=\prod_{v}\mathcal{O}_v^{\times}$. Recall that
\[1\rightarrow (K^{\times}\cap \hat{\mathcal{O}}^{\times}_K)\backslash \hat{\mathcal{O}}^{\times}_K\rightarrow K^{\times}\backslash \mathbb{A}_{K,f}^{\times}\rightarrow \operatorname{Cl}(K)\rightarrow 1,\]
which implies that $\mu(K^{\times}\backslash \mathbb{A}_{K,f}^{\times})=h_K\mu((K^{\times}\cap \hat{\mathcal{O}}^{\times}_K)\backslash \hat{\mathcal{O}}^{\times}_{K})=\frac{h_K}{w_K}$.

\end{proof}

\subsection{Proof of Theorem \ref{th:main}}

Let us begin by recalling Dirichlet's class number formula for an imaginary quadratic field $K/\mathbb{Q}$:
\begin{equation}\label{appeq5}
L\left(1,K/\mathbb{Q}\right)=\frac{2 \pi h_K}{w_K\sqrt{\abs{\Delta_K}}},
\end{equation}
where $\Delta_K$ the discriminant of $K$, $w_K$ is the number of roots of unity in $K$, and $h_K$ is its class number.

At this point the result essentially follows from a combination of Corollary \ref{appcor2}, Lemma \ref{applem5}, and \eqref{appeq5}. We give the details:

By \eqref{appeq1} we have
\begin{equation*}
\wnum I(a,p)=\vol_f(\gamma)O^\can_{\gamma}(\phi_p)\prod_{\substack{ l\neq p}}O_{\gamma}^\can(\phi_0).
\end{equation*}
Substituting Corollary \ref{appcor2} and Lemma \ref{applem5} into the above equation (and noting that the product converges only conditionally so should be calculated in the given order) gives
\[\#I(a,p)=\frac{h_K}{w_K}\frac{\sqrt{\abs{\Delta_K}_p}\zeta_p(2)}{\sqrt{\abs{D(\gamma)}_p}L_p(1,K/\mathbb{Q})}O^\geom_{\gamma}(\phi_p)\prod_{\substack{\ell\neq p}}\frac{\sqrt{\abs{\Delta_K}_\ell}\zeta_\ell(2)}{\sqrt{\abs{D(\gamma)}_\ell}L_\ell(1,K/\mathbb{Q})}O^\geom_{\gamma}(\phi_0).\]

Then, substituting \eqref{appeq3} and \eqref{appeq4}, we get
\begin{align*}
\wnum I(a,p)&=\frac{h_K}{w_K}\prod_{\substack{\ell\text{ finite}}}\frac{\sqrt{\abs{\Delta_K}_\ell}}{\sqrt{\abs{D(\gamma)}_\ell}L_\ell(1,K/\mathbb{Q})}v_\ell(a,p)\\
&=\frac{h_K}{w_K}\frac{1}{L(1,K/\mathbb{Q})}\prod_{\ell\text{ finite}}\frac{\sqrt{\abs{\Delta_K}_\ell}}{\sqrt{\abs{D(\gamma)}_\ell}}v_\ell(a,p).
\end{align*}
Finally, using \eqref{appeq5} and the product formula (i.e. $\prod_\ell \abs{\Delta}_\ell=\prod_\ell\abs{D(\gamma)}_\ell=1$, where the product is over \emph{all} primes $\ell$) in the above equation we get,
\begin{align*}
\#I(a,p)&=\frac{\sqrt{\abs{\Delta_K}}}{2\pi}\frac{\sqrt{\abs{D(\gamma)}}}{\sqrt{\abs{\Delta_K}}}\prod_{\ell\text{ finite}}v_\ell(a,p)\\
&=\frac{\sqrt{4p-a^2}}{2\pi}\prod_{\ell\text{ finite}}v_\ell(a,p),
\end{align*}
where the absolute values $\abs{\cdot}$ are the archimedean absolute values and we used the fact that $\abs{D(\gamma)}=4p-a^2$.

\hfill{$\square$}

{\sc Columbia University, New York, NY}

{\it E-mail address:} {\tt altug@math.columbia.edu}

\newpage
\bibliographystyle{amsalpha}
\bibliography{biblio}

\end{document}